\documentclass[a4paper, reqno]{amsart}
\usepackage{graphicx} 
\usepackage{amssymb}
\usepackage{amsmath}
\usepackage{enumerate}
\usepackage{enumitem}
\usepackage{amsthm}
\usepackage{mathtools}
\usepackage{todonotes}
\usepackage[backend=biber, sorting=nyvt, style=alphabetic, maxcitenames=4, isbn=false, maxbibnames=5, url=false]{biblatex}
\bibliography{franzi.bib}
\usepackage[french,english]{babel}

\newcommand{\reqnomode}{\tagsleft@false\let\veqno\@@eqno}
\newcommand{\Lring}{\mathcal{L}_\mathrm{ring}}
\newcommand{\Lval}{\mathcal{L}_\mathrm{val}}
\newcommand{\Oo}{\mathcal{O}}
\newcommand{\Loag}{\mathcal{L}_\mathrm{oag}}

\newtheorem{Theorem}[subsection]{Theorem}

\newtheorem*{thm*}{Theorem}
\newtheorem{thm}[subsection]{Theorem}
\newtheorem{cor}[subsection]{Corollary}

\newtheorem{Q}[subsection]{Question}
\newtheorem{lemma}[subsection]{Lemma}
\newtheorem{fact}[subsection]{Fact}

\newtheorem{prop}[subsection]{Proposition}
\newtheorem{ex}[subsection]{Example}
\newtheorem{remark}[subsection]{Remark}
\newtheorem{Def}[subsection]{Definition}
\theoremstyle{remark}
\newtheorem{claim}{Claim}
\newtheorem*{claim*}{Claim}
\newtheorem*{poc}{Proof of Claim}

\newenvironment{claimproof}{%
	\begin{poc}
	}{%
	\hfill$\Box_\textrm{Claim}$
	\end{poc}
	}

\newtheorem*{pen}{Proof}

\makeatletter
\newenvironment{abstracts}{%
	\footnotesize
  \newcommand{\abstractin}[1]{%
	\setlength{\parindent}{12pt}
    \selectlanguage{##1}%
    \item[\hskip\labelsep\scshape\abstractname.]%
  }%
  \global\setbox\abstractbox=\vbox \bgroup
    \hsize\textwidth \advance\hsize-8pc
    \list{}{\labelwidth\z@ \leftmargin3pc \rightmargin\leftmargin}%
}{%
  \endlist\egroup
}
\makeatother

\title{Perfectoid fields in the language of rings}
\author{Franziska Jahnke}
\address{Institute for Logic, Language and Computation, University of Amsterdam, 
Science Park 107,
1098 XG Amsterdam, The Netherlands and\newline
\indent Institute for Mathematical Logic and Foundations, Department of Mathematics and Computer Science,
University of M\"unster,
Einsteinstraße 62,
48149 M\"unster, Germany.\newline
\url{franziska.jahnke@uni-muenster.de}}
\thanks{
Franziska Jahnke's research is funded by the Deutsche Forschungsgemeinschaft (DFG, German Research Foundation) under Germany's Excellence Strategy EXC 2044/2–390685587 and EXC 2044-390685587, 
Mathematics Münster: Dynamics–Geometry–Structure, as well as by the DFG-ANR grant \emph{AKE Pact}, DFG - project number 545528554.}

\author{Ferr\'eol Lavaud}
\address{Laboratoire de Math\'ematiques,
UFR Sciences et techniques,
16 route de Gray,
25030 Besançon Cedex,
France\newline
\url{ferreol.lavaud@gmail.com}
}
\thanks{This article is based on the MSc thesis of the second author, written under the supervision of the first author at the University of Amsterdam.}

\begin{document}

\begin{abstracts}
\abstractin{english}
    Building on work of the first author and Kartas, we identify the elementary class generated
    by all perfectoid fields of fixed residue characteristic $p$ in the language of rings. 
    
\medskip 

\abstractin{french} En nous appuyant sur les travaux de la première auteure et de Kartas, nous identifions la classe élémentaire générée par les corps perfectoïdes de caractéristique résiduelle $p$ fixée, dans le langage des anneaux.
\end{abstracts}

\maketitle

\section{Introduction}
The aim of this note is to identify the elementary class generated by all perfectoid fields
the language of rings $\Lring = \{0,1,+,\cdot,{-}\}$, that is, the smallest class of fields
which is axiomatized by an $\Lring$-theory and contains all perfectoid fields of a given
residue characteristic $p >0$.
Recall that a field $K$ with a nontrivial valuation $v: K^\times \to \mathbb{R}$ is called perfectoid if
\begin{enumerate}
    \item[(i)] $(K,v)$ is complete, and
    \item[(ii)] $\mathcal{O}_v/(p)$ is semiperfect, where $p = \mathrm{char}(\mathcal{O}_v/
    \mathfrak{m}_v)>0$, and
    \item[(iii)] $v(K^\times)$ is non-discrete.
\end{enumerate}
Here, a ring $R$ of characteristic $p$ is called semiperfect if the Frobenius map $x \mapsto x^p$
is surjective on $R$. See subsection \ref{subsec:not} for details on notation.
Perfectoid fields play a crucial role in perfectoid geometry (cf.~\cite{ScholzeICM}),
and the model-theoretic
properties of perfectoid fields have recently received a considerable amount of attention, e.g.~in work of Kuhlmann--Rzepka \cite{KR21}, Kartas \cite{KK1, Kar25}, Jahnke--Kartas \cite{JK25}, Rideau-Kikuchi--Scanlon--Simon \cite{RKSS25},
Ketelsen \cite{KPhD}, Gitin--Koenigsmann--Stock \cite{GKS26}, and Gambardella--Kartas \cite{gambardella2026transferprincipleskatokuzumakiconjecture}. The model theory of perfectoid fields is also treated in two recent surveys by 
Kuhlmann \cite{Kuh25} and Anscombe \cite{Bour26}.

As evident from the definition of a perfectoid field, one most naturally considers perfectoid fields in 
$\Lval=
\mathcal{L}_\mathrm{ring}\cup \{\mathcal{O}\}$, where $\mathcal{O}$ is a unary predicate interpreted as the valuation ring. 
Clearly, the fact that $v$ is archimedean (i.e., $v(K^\times)\leq \mathbb{R}$) is not elementary
in first-order logic, as it is not preserved under ultrapowers. 
One way to remedy this is by stipulating that the value group should be regular,
which is a first-order property that characterizes those ordered abelian groups $\Loag$- elementary
equivalent to (non-trivial) archimedean ones by the work of Robinson and Zakon \cite{robzak1960},
where $\Loag = \{0,+ \leq\}$.
An ordered abelian group is regular if and only if every quotient by a nontrivial convex subgroup
is divisible \cite{PConrad}.

Moreover,
completeness of $(K,v)$ is not a first-order property either. This can be seen directly from the
L\"owenheim-Skolem theorem \cite[Theorem 2.1.3]{TZ}: Every perfectoid field has a countable elementary substructure, 
and no countable field is complete with respect to a nontrivial
valuation.
A common work-around is to
replace completeness by henselianity, as every valued field with archimedean value group 
which is complete is henselian
\cite[Theorem 1.3.1]{EP05}.

Last but not least, property (iii) is elementary in $\Lval$, and property (ii) is elementary 
for fixed
$p >0$. As the residue characteristic of a perfectoid field is necessarily positive, 
we fix the residue characteristic throughout the paper.

In conclusion, the class
$$\mathcal{C}_p^\mathrm{val}=\{(K,v)\textrm{ henselian}\mid 
v(K^\times) \textrm{ regular and not discrete, } \mathcal{O}_v/(p)\textrm{ semiperfect}\}$$
is $\Lval$-elementary. However, it 
is not generated by the class of perfectoid fields of residue characteristic $p$,
as shown in the next section: it
contains fields not elementarily equivalent to any ultraproducts of perfectoid fields. 
Instead, we focus on the language of rings in section \ref{sec:ring}, for which we obtain
a complete answer.

One characterization given implicitly of the $\Lring$-elementary class
generated by all perfectoid fields of fixed residue characteristic $p$ is given in \cite{JK25}: 
a field is $\Lring$-elementarily equivalent to a perfectoid field of residue
    characteristic $p$ if and only if it is $\Lring$-elementarily equivalent to a field admitting a nontrivial tame valuation with divisible value group and residue characteristic $p$
    (which is an axiomatizable class in $\Lring$). We present this in Proposition \ref{prop:tame}. 

    For fields of characteristic $0$, we present an alternative characterization in Theorem \ref{thm:main}, namely we show that the reduct of $\mathcal{C}_p^\textrm{val}$ to $\Lring$
    (or more precisely its intersection with the class of fields of characteristic $0$) is 
    sufficient:
\begin{thm*} Let $p$ be prime.
The class of fields
\begin{align*}
\mathcal{C}_{(0,p)}^\mathrm{ring} = \{K \mid \,& K \textrm{ admits a henselian valuation $v$ of mixed characteristic } (0,p)\\ &\textrm{ with $\Oo_v/(p)$ semiperfect and $vK$ regular and non-discrete } \}    
\end{align*}
is $\Lring$-elementary and is the $\Lring$-elementary class generated by all
perfectoid fields of characteristic $(0,p)$.
\end{thm*}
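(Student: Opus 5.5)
We need to show two things: (a) every field in $\mathcal{C}_{(0,p)}^\mathrm{ring}$ is $\Lring$-elementarily equivalent to some perfectoid field (or an ultraproduct of them) of characteristic $(0,p)$; (b) the class is closed under elementary equivalence and ultraproducts, i.e.\ it is $\Lring$-elementary. Every perfectoid field of characteristic $(0,p)$ clearly belongs to the class, since complete rank-one valued fields are henselian and archimedean value groups are regular. So (a) and (b) together identify the class as the generated one.

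For (b) I would use the criterion of Proposition \ref{prop:tame}: a field is elementarily equivalent to a perfectoid field of residue characteristic $p$ if and only if it is elementarily equivalent to a field admitting a nontrivial tame valuation with divisible value group and residue characteristic $p$. That class is $\Lring$-elementary. It therefore suffices to show that $\mathcal{C}_{(0,p)}^\mathrm{ring}$ coincides with the characteristic-$0$ part of it. Closure of $\mathcal{C}_{(0,p)}^\mathrm{ring}$ under elementary equivalence then follows, and so does (a), because the class generated by perfectoid fields is contained in $\mathcal{C}_{(0,p)}^\mathrm{ring}$.

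The key step is to show that if $K \in \mathcal{C}_{(0,p)}^\mathrm{ring}$ via $v$, then $K$ is elementarily equivalent to a field with a nontrivial tame valuation with divisible value group and residue characteristic $p$.

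First I would pass to an $\aleph_1$-saturated elementary extension $(K^*,v^*)$ in $\Lval$. All the defining properties of $\mathcal{C}_p^\mathrm{val}$ are $\Lval$-elementary, so they are preserved. In $v^*K^*$, let $\Delta$ be the largest convex subgroup not containing $v(p)$, and let $w$ be the coarsening of $v^*$ with value group $v^*K^*/\Delta$. Because the group is regular and non-discrete, $v^*K^*/\Delta$ is divisible. By saturation, $\Delta$ is nontrivial, and there are infinitesimals relative to $v(p)$.

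Next, $w$ is henselian of residue characteristic $0$ if we instead take the convex hull of $\mathbb{Z}v(p)$. Either choice could work, so I need a decomposition: $v^* = \bar v\circ w_0$, where $w_0$ has residue characteristic $0$ and the residue field $k_{w_0}$ carries a rank-one valuation $\bar v$. By saturation, $k_{w_0}$ is spherically complete, hence complete; with the semiperfect condition it becomes a perfectoid field, and it is elementarily equivalent to a genuine perfectoid one by Löwenheim–Skolem-type arguments. Then, by Ax–Kochen–Ershov in residue characteristic $0$, $K^*$ is $\Lring$-equivalent to $k_{w_0}((\Gamma))$ for suitable $\Gamma$. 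That field carries the composite valuation.

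To obtain tameness, I would instead use the tilting/deeply ramified input from \cite{JK25}: the completion of a rank-one perfectoid field is elementarily equivalent to a tame field with divisible value group. Then I would replace $\Gamma$ by a divisible group, which should be harmless for the ring theory via the standard AKE for equicharacteristic $0$ and divisible hulls.

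I expect the main obstacle to be controlling the value group $\Gamma=v^*K^*/\ldots$ of $w_0$. It need not be divisible, and $K$'s $\Lring$-theory can see this, for example through $n$-th powers. Here regularity is essential: the quotient above the archimedean part is divisible, so $\Gamma$ is divisible, which is exactly what is needed. The characteristic-$0$ hypothesis is used so that $p$ has positive value and the relevant convex subgroup exists.
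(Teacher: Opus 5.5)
\textbf{There is a genuine gap: you prove only one inclusion, and your justification for the other is circular.}

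Your ``key step'' is sound in outline and is essentially the paper's Claim~2. That step says every $K\in\mathcal{C}_{(0,p)}^\mathrm{ring}$ is $\Lring$-equivalent to a field with a nontrivial tame valuation with divisible value group. In an $\aleph_1$-saturated ultrapower, regularity makes the value group of the residue-characteristic-$0$ coarsening divisible, and its residue field carries a spherically complete valuation with value group $\mathbb{R}$. Two small corrections:
\begin{itemize}
\item The rank-one valuation on $k_{w_0}$ is the one induced by your $w$, with value group $\langle v(p)\rangle/\Delta$. It is not the one induced by $v^*$, whose value group is the convex hull of $v(p)$; that group has rank $>1$ because $\Delta\neq 0$.
\item Tameness follows directly from Kuhlmann's criterion \cite[Theorem 3.2]{Kuh16}: algebraically maximal, divisible value group, perfect residue field. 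Composing tame valuations then finishes, with no detour through \cite{JK25} and AKE.
\end{itemize}
Even so, this step only shows that $\mathcal{C}_{(0,p)}^\mathrm{ring}$ is contained in the characteristic-$0$ part of $\mathcal{C}_p^\mathrm{ring}$, i.e.\ in the class generated by perfectoid fields.

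The reverse inclusion is what makes $\mathcal{C}_{(0,p)}^\mathrm{ring}$ elementary, and you never prove it. You justify it by ``the class generated by perfectoid fields is contained in $\mathcal{C}_{(0,p)}^\mathrm{ring}$''. That containment is only available once you know $\mathcal{C}_{(0,p)}^\mathrm{ring}$ is closed under $\Lring$-elementary equivalence; given your key step, it is equivalent to that closure. So the argument is circular.

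This step is not formal. If $L\equiv_{\Lring}K$, the valuation $v$ is not part of the $\Lring$-structure and is not in general definable. Restricting $v^{\mathcal U}$ from a common ultrapower $K^{\mathcal U}\cong L^{\mathcal U}$ to $L$ preserves henselianity. It gives no control over semiperfectness of $\Oo/(p)$ or regularity of the value group.

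The paper devotes most of its work to exactly this transfer:
\begin{itemize}
\item Lemma~\ref{lem-mixed} transfers henselian mixed-characteristic valuations, and unramified ones via Robinson's definition of the valuation ring.
\item Claim~1 shows members of the class admit no unramified henselian valuation; this is needed for non-discreteness.
\item Proposition~\ref{prop:o/p} shows that, absent unramified henselian valuations, semiperfectness of $\Oo/(p)$ for one henselian valuation of residue characteristic $p$ gives it for all. This uses \cite[Lemma 7.2.19]{JK25}.
\item Proposition~\ref{HensVal0pSemiperfElem} then gives the transfer of the semiperfect condition.
\item Finally, a regularity transfer compares $w^{\mathcal U}$ with $v^{\mathcal U}$ and shows $w^{\mathcal U}M/\langle w^{\mathcal U}(p)\rangle$ is divisible. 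It expresses this by first-order sentences in the constant $w(p)$ and pulls it back to $wL$, for a $w$ with no proper coarsening of residue characteristic $p$.
\end{itemize}
You need to supply some argument of this kind for the proof to be complete.
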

In particular, we show that admitting a henselian valuation of mixed characteristic $(0,p)$ with 
$\Oo/(p)$ semiperfect is $\Lring$-axiomatizable.     

\subsection{Notation and nomenclature} \label{subsec:not}
Given a first-order language $\mathcal{L}$, we say that a class of $\mathcal{L}$-structures is \emph{elementary} if it is the model class of an $\mathcal{L}$-theory $T$ 
(where $T$ is not necessarily finite). Similarly, we call a property of $\mathcal{L}$-structures
\emph{axiomatizable} if it can be axiomatized by a (not necessarily finite) $\mathcal{L}$-theory $T$.
If $\mathcal{C}$ is a class of $\mathcal{L}$-structures, the 
\emph{elementary class generated by $\mathcal{C}$}
is the smallest elementary class containing all members of $\mathcal{C}$. 

Throughout, by \emph{valuation}, we mean a Krull valuation and we denote all valuations
additively, following the conventions from \cite{EP05}.
For a valued field $(K,v)$, we write $\mathcal{O}_v$ for the valuation ring,
$\mathfrak{m}_v$ for its maximal ideal,
$Kv$ for its residue field, and $vK$ or $v(K^\times)$ for its value group. We call a valuation
$v$ \emph{discrete} if $vK$ has a minimum positive element, and non-discrete or dense
otherwise. We say that a valuation is \emph{real-valued} if $vK$ is (isomorphic to) a
\emph{subgroup} of the real numbers. For a real-valued field $(K,v)$, we 
write $\widehat{(K,v)}$ or $(\widehat{K}, \widehat{v})$ for its (Cauchy) completion.
 
\subsection*{Acknowledgements}
We wish to thank Sylvy Anscombe, Philip Dittmann, Leo Gitin, and Konstantinos Kartas for their
interest and inspiring discussions. 
Special thanks to Sylvy Anscombe and Philip Dittmann
for their multitude of very helpful comments on a previous version, in particular around 
Lemma \ref{lem-rk1} and \cite{AK16}. 
We also thank the anonymous referee for their detailed reading of the paper and their many 
helpful
comments!
Moreover, our gratitude goes to the organizers of
the DDG seminar (Séminaire Structures algébriques ordonnées), 
a wonderful seminar whose 40th birthday is celebrated with this volume.
Merci beaucoup à vous, Françoise Delon, Max Dickmann et Danielle Gondard, d'avoir organisé cet excellent séminaire pendant plus de quarante ans.
Merci également, Vincent Bagayoko, de t'être joint à eux et pour perpétuer la tradition.

\section{Perfectoid fields in the language of valued fields}
We start by discussing an example of a henselian valued field $(K,v)$ of positive characteristic 
that is in the class $$\mathcal{C}_p^\mathrm{val} = \{ (K,v)\textrm{ henselian}\mid 
vK \textrm{ regular and not discrete, } \mathcal{O}_v/(p)\textrm{ semiperfect}\}$$
as introduced in the introduction, but which is not elementarily equivalent to an ultraproduct 
of perfectoid fields.

Recall that if $(K,v)$ does not admit any proper finite immediate extensions, $(K,v)$ is called \emph{algebraically maximal}.
Note that if $(K,v)$ is a valued field with $vK \leq \mathbb{R}$, 
we have $\Oo_v[\varpi^{-1}] = K$ for any
$\varpi \in \mathfrak{m}_v \setminus \{0\}$, and in particular, any coarsening of $v$
is trivial and thus algebraically maximal. By \cite{JK25}, under certain hypotheses, this property is axiomatizable, using a constant for $\varpi$:

\begin{remark}
    By \cite[Proposition 4.1.4]{JK25}, for henselian valued fields of residue characteristic $p$ with
$\Oo_v/(p)$ semiperfect, the fact that the coarsening $u$ 
corresponding to inverting $\varpi$ (i.e., $\Oo_u=\Oo_v[\varpi^{-1}]$)
is algebraically maximal is an elementary property in
$\Lval$ together with a constant symbol for $\varpi$. More precisely, it is 
axiomatized by the axiom scheme:
\begin{align*} 
 \textrm{  For any finite extension $(K',v')/(K,v)$ with ${v'}K'=vK$,}\\
 \textrm{there is $a \in \Oo_{v'}$ with $0\leq v(\delta(a))\leq  v\varpi$ and $K'=K(a)$.}
 \tag{$\star_\varpi$}
\end{align*}
Here, $\delta(a)$ denotes the different of $a$ (i.e., the derivative of the minimal polynomial
of $a$ evaluated at $a$). The axiom scheme $(\star_\varpi)$ contains
one sentence for each potential degree $[K':K]$. In conclusion, for any perfectoid field $(K,v)$,
we have $$(K,v)\models \forall \varpi \in \mathfrak{m}_v: (\star_\varpi) \textrm{ holds}.$$
\end{remark}

We now use this to show that $\mathcal{C}_p^\mathrm{val}$ is not the $\Lval$-elementary class
generated by all perfectoid fields of residue characteristic $p$:

\begin{ex}
Let $\mathcal{U}$ be a nonprincipal ultrafilter on $\omega$ and consider the ultraproduct 
$$(L_0,w_0)\coloneq (\mathbb{F}_p((t)),v_t)^\mathcal{U}.$$ Let $L$ denote the perfect 
hull of $L_0$, and let $w$ denote the unique extension of $w_0$ to $L$. Then $(L,w)$ is henselian, with value group regular and non-discrete. As $L$ is perfect of characteristic $p$, 
the ring $\Oo_w/(p) = \Oo_w$ is semiperfect.
However, by Example 4.1.5 in \cite{Schaaf}, the property $(\star_\varpi)$ fails in $(L,w)$ (for $p \neq 2$) for all $\varpi \in \mathfrak{m}_w \setminus \{0\}$.
Hence we conclude
 $$(L,w)\models \forall \varpi \in \mathfrak{m}_w: (\star_\varpi) \textrm{ fails}.$$
Thus, $L$ is not $\Lval$-elementarily equivalent to a perfectoid field, nor to an 
ultraproduct of perfectoid fields. 

Since the value group of $v_0L_0$ is regular (it is an
ultrapower of the ordered abelian group $\mathbb{Z}$), so is its $p$-divisible hull $wL$: there is
a one-to-one correspondence of convex subgroups of $v_0L_0$ and $vL$, and so the divisibility
of $v_0L_0/\Delta$ for any proper convex subgroup $\Delta \leq v_0K_0$ 
implies the divisibility of $vL/\Gamma$ for all proper convex subgroups $\Gamma\leq vL$. 
By \cite{PConrad}, this property is equivalent to regularity.
\end{ex}

A corresponding example of a valued field mixed characteristic which is in $\mathcal{C}_p^\mathrm{val}$
but not elementarily equivalent to an ultraproduct of perfectoid fields (due to the failure of 
the same axiom scheme)
was constructed by Leo Gitin. \smallskip

The example above gives rise to the following    
\begin{Q} \label{Q}
    Is the $\Lval$-elementary class generated by perfectoid fields of residue characteristic $p$
    given by the subclass $\mathcal{C}_{p,\star}^\mathrm{val}$ of $\mathcal{C}_p^\mathrm{val}$ such that $(\star_\varpi)$ holds
    for all $\varpi \in \mathfrak{m}_v \setminus \{0\}$?
\end{Q}

As we briefly discuss in the next paragraphs, a potential 
route to answering this question is to
construct an elementarily equivalent real-valued
field given any $(K,v) \in \mathcal{C}_{p,\star}^\mathrm{val}$. Such a construction seems beyond our current techniques, unless $(K,v)$ is tame or of finite rank.

\begin{Def}
    Let $(K,v)$ be a henselian valued field with value group $\Gamma$ and residue field $k$. A finite valued field extension $(K',v')/(K,v)$ is said to be \textit{tame} if the following are satisfied:
\begin{enumerate}
\item  If $p=\text{char}(k)>0$, then $p\nmid [\Gamma':\Gamma]$. 
\item The residue field extension $k'/k$ is separable.
\item The extension $(K',v')/(K,v)$ is \textit{defectless}, i.e., 
$$[K':K]=[\Gamma':\Gamma]\cdot [k':k]$$
\end{enumerate}
We say that $(K,v)$ is \textit{tame} if every finite valued field extension of $(K,v)$ is tame. 
\end{Def}
 Note that tame valued fields form an $\Lval$-elementary class (cf.~\cite[Section 7.1]{Kuh16}) and are in particular henselian. 
All tame fields are algebraically maximal since 
every defectless henselian field is algebraically maximal. Moreover, all coarsenings
of a tame valuation are again tame \cite[Lemma 3.14]{Kuh16} and hence algebraically maximal. By
\cite[Theorem 1.2]{KR21}, for any (non-trivially valued) tame field $(K,v)$ 
of residue
characteristic $p>0$ the residue ring $\mathcal{O}_v/(p)$ is semiperfect. Thus, any tame field
of residue characteristic $p>0$ is in the class
$\mathcal{C}_{p, \star}^\mathrm{val}$.
 Kuhlmann proves Ax--Kochen/Ershov principles for tame fields in \cite{Kuh16}. 
 As a straightforward application of Kuhlmann's results, we show:

\begin{lemma} \label{lem-rk1}
\begin{enumerate}
    \item Any tame field $(K,v)$ of positive characteristic with regular value group 
is $\Lval$-elementarily equivalent to a real-valued field.  
\item Any tame field $(K,v)$ of mixed characteristic 
with divisible value group
is $\Lval$-elementarily equivalent to a real-valued field.    
\end{enumerate}
\end{lemma}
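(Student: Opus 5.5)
We plan to apply Kuhlmann's Ax--Kochen/Ershov principles for tame fields \cite{Kuh16}. In equal positive characteristic they say that two tame fields $(K,v)$ and $(L,w)$ with $Kv \equiv Lw$ in $\Lring$ and $vK \equiv wL$ in $\Loag$ are $\Lval$-elementarily equivalent. In mixed characteristic the corresponding statement holds for tame fields whose value groups are divisible (more precisely, $p$-divisible, which is automatic for tame fields); this is why (2) assumes divisibility. In both cases the strategy is to build a real-valued tame field $(L,w)$ with $Lw \equiv Kv$ (or even $Lw = Kv$) and $wL \equiv vK$, and then invoke the relevant AKE principle.

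\emph{Value group.} In (1), $vK$ is regular, so by Robinson--Zakon \cite{robzak1960} it is $\Loag$-elementarily equivalent to some archimedean group $\Gamma \leq \mathbb{R}$; by Löwenheim--Skolem we may take $\Gamma$ countable. Since $K$ is tame of characteristic $p$, $vK$ is $p$-divisible, and this first-order property transfers to $\Gamma$. In (2), $vK$ is divisible, so we take $\Gamma = \mathbb{Q}$; by completeness of the theory of nontrivial divisible ordered abelian groups, $\mathbb{Q} \equiv vK$.

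\emph{Residue field.} The residue field $k = Kv$ of a tame field is perfect, because residue extensions are separable. We keep $k$ itself, or pass to a countable elementary substructure if convenient.

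\emph{Construction.} In (1), start from the Hahn field $k((t^\Gamma))$, or simply its subfield generated by $k$ and $t^\Gamma$, with the $t$-adic valuation. This is real-valued with value group $\Gamma$ and residue field $k$. Take $L$ to be a maximal immediate algebraic extension of the henselization; alternatively, use that the Hahn field itself is maximal, hence algebraically maximal, henselian and defectless. Because $\Gamma$ is $p$-divisible and $k$ is perfect, an algebraically maximal field of this kind is tame by Kuhlmann's characterization (tame $\iff$ algebraically maximal, $p$-divisible value group, perfect residue field). Algebraic and immediate extensions do not change the value group, so $wL = \Gamma \leq \mathbb{R}$ and the field is real-valued.

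In (2) the residue field $k$ is perfect of characteristic $p$. We take $L_0$ to be the completion of the maximal unramified extension of $W(k)[1/p]$, or more simply $W(k)[1/p]$ with the $p$-adic valuation, which is real-valued with residue field $k$ and value group $\mathbb{Z}$. Adjoining all roots $p^{1/n}$ gives value group $\mathbb{Q}$ while keeping residue field $k$ and keeping the field real-valued. We then pass to a maximal immediate algebraic extension $L$; the result is algebraically maximal, has divisible value group $\mathbb{Q}$ and perfect residue field, and is therefore tame.

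\emph{Main obstacle.} The delicate step is applying the correct AKE statement. In mixed characteristic, Kuhlmann's transfer for tame fields requires care: the principle $Kv \equiv Lw$, $vK \equiv wL \Rightarrow K \equiv L$ holds for tame fields in characteristic $p$, and in mixed characteristic when the value groups are divisible (or under extra hypotheses on the core). We must check that the invoked version applies, which is exactly why divisibility is imposed in (2). A second point to verify is that the immediate algebraic extension stays real-valued. This is clear, since immediate extensions preserve the value group.
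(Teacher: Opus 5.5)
Your argument for (1) is the paper's: Robinson--Zakon gives $\Gamma\le\mathbb{R}$ with $\Gamma\equiv vK$, and Kuhlmann's equal-characteristic $\mathrm{AKE}^{\equiv}$ principle gives $(K,v)\equiv(Kv((t^\Gamma)),v_t)$.

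Part (2) has a genuine gap, because the principle you invoke there is false. Tame fields of mixed characteristic with divisible value groups and elementarily equivalent residue fields need \emph{not} be elementarily equivalent. Their theory also records which algebraic elements they contain. For instance, take $p=3$ and let $K_1$ (resp.\ $K_2$) be a maximal immediate algebraic extension of $\bigcup_n\mathbb{Q}_3(3^{1/n!})$ (resp.\ $\bigcup_n\mathbb{Q}_3((-3)^{1/n!})$). Both are algebraically maximal with value group $\mathbb{Q}$ and residue field $\mathbb{F}_3$, so both are tame by the characterization you quote. Yet $\sqrt{3}\in K_1$, while $\sqrt{3}\notin K_2$: indeed $\sqrt{-3}\in K_2$, and $\sqrt{-1}\notin K_2$ because $-1$ is not a square in $\mathbb{F}_3$. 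So $K_1\not\equiv K_2$ already in $\Lring$. Now let $K$ be a nonprincipal ultrapower of $K_2$. Your field built from $W(\mathbb{F}_3)[1/3]=\mathbb{Q}_3$ by adjoining the roots $3^{1/n}$ contains $\sqrt3$, so it is not elementarily equivalent to $K$. No model built from scratch with prescribed value group and residue field can be guaranteed to work. The ``extra hypotheses on the core'' you allude to are exactly what is missing, and you never supply them. Separately, completing the maximal unramified extension of $W(k)[1/p]$ would replace the residue field $k$ by its algebraic closure.

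The paper avoids this by producing the real-valued model \emph{inside} $K$. It then needs only the substructure principle $\mathrm{AKE}^{\preceq}$, which Kuhlmann proves for tame fields in all characteristics. The construction goes as follows.
\begin{enumerate}
\item Let $F_0=\mathbb{Q}(x_i)$, where the $x_i$ lift a transcendence basis of $Kv$. Then $v|_{F_0}$ is the Gauss extension of $v_p$, with value group $\mathbb{Z}$.
\item Let $F$ be the relative algebraic closure of $F_0$ in $K$, and put $w=v|_F$.
\item Since $Kv/Fw$ is algebraic, \cite[Lemma 3.7]{Kuh16} shows that $(F,w)$ is tame, with $Fw=Kv$ and $vK/wF$ torsion-free.
\item As $\mathbb{Z}\le wF\le\mathbb{Q}$ and $vK$ is divisible, torsion-freeness forces $wF=\mathbb{Q}$.
\item Then $wF\preceq vK$, since divisible ordered abelian groups are model complete. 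Together with $Fw=Kv$, this gives $(F,w)\preceq(K,v)$, and $F$ is real-valued.
\end{enumerate}
Because $F\subseteq K$, the algebraic data agree automatically, which is exactly what your construction cannot ensure. Note also where divisibility is really used: it forces the rank-one pure subgroup $wF$ to be all of $\mathbb{Q}$. It does not, as you suggest, make an $\mathrm{AKE}^{\equiv}$ principle available in mixed characteristic.
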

\begin{pen}
\begin{enumerate}
\item If $\mathrm{char}(K)=p>0$, $vK\equiv \Gamma \leq \mathbb{R}$ and $(K,v)$ is tame, 
then \cite[Theorem 1.4]{Kuh16} implies $(K,v) \equiv (Kv((t^\Gamma)),v_t))$.
\item If $(K,v)$ is tame of mixed characteristic $(0,p)$ with $vK \equiv \mathbb{Q}$, then 
consider the subfield $F_0 \subseteq K$ generated by adjoining to $(\mathbb{Q},v_p)$ the lift of a transcendence base of $Kv$
to $K$.
Writing $w_0$ for the Gauss extension of $v_p$ to $F_0$ (see \cite[Corollary 2.2.2]{EP05}), 
we obtain $(F_0, w_0) \subseteq (K,v)$
and $w_0F_0 = \mathbb{Z}$ by construction.
Let now $F$ be the relative algebraic closure of $F_0$ in $K$, and let 
$w$ denote the restriction of $v$ to $F$. Then, we get $Fw=Kv$ and $wF \leq \mathbb{Q}$. 
By \cite[Lemma 3.7]{Kuh16}, we conclude
that $(F,w)$ is an elementary substructure of $(K,v)$ with value group 
$wF=\mathbb{Q}$. \hfill{$\Box$}
\end{enumerate}
\end{pen}

It is not known whether Lemma \ref{lem-rk1}(2) can be strengthened to the analogue of (1) in 
mixed characteristic.%
\footnote{This would follow from \cite{AK16}. However, 
the paper \cite{AK16} contains an error in the proof of Theorem 1.3, as \cite[Theorem 1.3]{AK16}
implies that if $(K,v)$ is a valued tame field and $\Gamma \preceq vK$, 
then there is $(K',v') \preceq (K,v)$ with $v'K'=\Gamma$,
regardless of the characteristic of $(K,v)$. This is not true, as the following example
by Philip Dittmann shows:
Let $p$ be a prime and consider the tame field \label{ex:pd}
$(k,v_t)=(\mathbb{F}_p^\mathrm{alg}((t^{\mathbb{Z}[1/p]})), v_t)$, i.e., the field Laurent series with
coefficients in the $p$-divisible hull of $\mathbb{Z}$. Let now $(K,w)$ be a tame field of characteristic $(0,p)$ with
residue field $k$, value group $\mathbb{Q}$ and algebraically closed algebraic part.
Note that the composition $v$ of $w$ and $v_t$ is again tame. Its value group $vK$ is
the lexicographic sum $\mathbb{Q} \oplus \mathbb{Z}[1/p]$, which is regular and an $\Loag$-elementary
extension of $\Gamma\coloneqq \mathbb{Z}[1/p]$.
However, no elementary substructure $(L,w)$ of $(K,v)$ has value group $\Gamma$, as each such
has algebraically closed algebraic part and hence the value group $wL$ must contain $\mathbb{Q}$.
Presumably, the assumption $v(p) \in \Gamma$ needs to
be added to ensure that \cite[Theorem 1.3]{AK16} holds in mixed characteristic.}

More generally, Ax--Kochen/Ershov type principles are proven for almost tame fields (thus named in
\cite{Bour26}), a class 
including all perfectoid fields in \cite{JK25}. 
Most AK/E type principles reduce the study of a sufficiently well-behaved 
valued field to that of its value group and residue field.
Note that perfectoid fields may
admit immediate algebraic extensions: for example, 
adjoining an Artin-Schreier root of $1/t$ gives an
immediate extension of the perfectoid field $(\widehat{\mathbb{F}_p((t))^\mathrm{perf}}, v_t)$, i.e., the Cauchy completion of the perfect hull of $\mathbb{F}_p((t))$.
Thus, there is little hope of reducing the theory of a perfectoid field down to the theories
of its residue field and value group. Instead, the residue field is replaced by an
infinitesimal thickening, namely the residue ring $\mathcal{O}_v/(\varpi)$ for some $\varpi \in \mathfrak{m}_v \setminus \{0\}$. The corresponding AK/E type result is then the following:

\begin{thm}[{\cite[Theorem 5.1.4]{JK25}}] \label{thm:perfectoidAKE}
Let $(K,v)\subseteq (K',v')$ be two henselian valued fields of residue characteristic $p>0$ such that $\Oo_v/(p)$ and $\Oo_{v'}/(p)$ are semiperfect. Suppose there is $\varpi \in \mathfrak{m}_v$ such that both $\Oo_v[\varpi^{-1}]$ and $\Oo_{v'}[\varpi^{-1}]$ are algebraically maximal. Then the following are equivalent: 
\begin{enumerate}[label=(\roman*)]
\item $(K,v)\preceq (K',v')$ in $\Lval$.
\item \label{condition2} $\mathcal{O}_v/(\varpi )\preceq \mathcal{O}_{v'}/(\varpi)$ in $\Lring$ and $vK\preceq v'K'$ in $\mathcal{L}_{\mathrm{oag}}$.
\end{enumerate}
Moreover, if $vK$ and $v'K'$ are regularly dense, then the value group condition in \ref{condition2} can be omitted. 
\end{thm}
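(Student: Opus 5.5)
The implication (i)$\Rightarrow$(ii) is routine. Both $\mathcal{O}_v/(\varpi)$ (with $\varpi \in K$ as a parameter) and $vK = K^\times/\mathcal{O}_v^\times$ are interpretable in $(K,v)$ by the same formulas in $(K',v')$. An elementary embedding of valued fields therefore restricts to elementary embeddings of these interpreted structures.

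For (ii)$\Rightarrow$(i) I would use the standard embedding strategy, replacing $(K,v)$ and $(K',v')$ by suitably saturated elementary extensions. It then suffices to show: if $(K^*,v^*) \succeq (K,v)$ is $|K'|^+$-saturated, then any embedding of $(K,v)$ into $(K^*,v^*)$ that is elementary on the residue rings modulo $\varpi$ and on the value groups extends to $(K',v')$ compatibly. To organise this I decompose $v = \bar v \circ u$, where $\mathcal{O}_u = \mathcal{O}_v[\varpi^{-1}]$ is the coarsening obtained by inverting $\varpi$. Then $\bar v$ is the induced valuation on the residue field $Ku$, and its value group is the convex hull of $v\varpi$ modulo the largest convex subgroup not containing $v\varpi$. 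The key observation is that $\mathcal{O}_v/(\varpi) = \mathcal{O}_{\bar v}/(\varpi)$, so the residue ring "lives" entirely on the rank-one core $(Ku,\bar v)$. Meanwhile $u$ is henselian and, by hypothesis, algebraically maximal.

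The argument then proceeds in two steps.

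\textbf{Step 1 (rank-one core).} The structure $(Ku,\bar v)$ is henselian and (after saturation) has an archimedean-type value group with $\mathcal{O}_{\bar v}/(p)$ semiperfect. Hence its completion is deeply ramified, or perfectoid in the non-discrete case. I would show that $(Ku,\bar v)$ is determined up to elementary equivalence, and embeddings over it are controlled, by $\mathcal{O}_{\bar v}/(\varpi)$. The tool is tilting (Fontaine--Wintenberger / Scholze, or the Gabber--Ramero almost-purity machinery). It lets one lift an embedding of the residue rings modulo $\varpi$ to an embedding of tilts, and then untilt to an embedding of the (completed) core fields. Henselianity plus saturation should then let one pass from completions back to the fields themselves.

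\textbf{Step 2 (coarse part).} With the core handled, I would treat $u$ by a relative Ax--Kochen/Ershov argument for algebraically maximal fields whose residue fields are the cores.
\begin{itemize}
\item In mixed characteristic, $p$ is invertible in $\mathcal{O}_u$, so $u$ is equicharacteristic $0$ henselian. The classical relative AK/E embedding lemma then combines Step 1 with the value group hypothesis $vK \preceq v'K'$.
\item In positive characteristic, I would instead use Kuhlmann's embedding lemmas for algebraically maximal (Kaplansky-type) fields. These apply because $Ku$ is perfect-like thanks to semiperfectness.
\end{itemize}

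For the ``moreover'' clause, I would show that for regularly dense groups the condition $vK \preceq v'K'$ follows from the residue-ring condition. The element $v\varpi$ and the interval $[0,v\varpi]$ are encoded in $\mathcal{O}_v/(\varpi)$ via divisibility of ideals, and by Robinson--Zakon the theory of a regularly dense group is determined by the data on such an interval together with the divisibility of its quotients.

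I expect the main obstacle to be Step 1: lifting an elementary map of the thickened residue rings $\mathcal{O}/(\varpi)$ to an embedding of the rank-one henselian core fields. This is where the perfectoid/tilting theory is genuinely needed, and where one must take care to pass between henselian fields and their completions.
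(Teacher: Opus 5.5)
The paper gives no proof of this statement (it is imported from \cite[Theorem 5.1.4]{JK25}), so your sketch has to stand on its own. Your (i)$\Rightarrow$(ii) is fine. The direction (ii)$\Rightarrow$(i), however, has a genuine gap at Step 1, and Step 1 also rests on a wrong description of the decomposition.

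Write $\Delta_1$ for the convex hull of $v\varpi$ and $\Delta_0$ for the largest convex subgroup not containing $v\varpi$. With $\Oo_u=\Oo_v[\varpi^{-1}]$, the induced valuation $\bar v$ on $Ku$ has value group $\Delta_1$, not $\Delta_1/\Delta_0$. After the saturation you perform at the outset, $\Delta_0$ is nontrivial as soon as $vK$ is dense. So $(Ku,\bar v)$ is not of rank one, neither it nor its completion is a perfectoid field, and the tilting correspondence for perfectoid fields does not apply to it.

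Passing to the rank-one coarsening $\bar w$ of $\bar v$ does not help. The ring $\Oo_{\bar w}/(\varpi)$ is the localization of $\Oo_v/(\varpi)$ at the prime of elements of non-infinitesimal value, so your key observation fails for $\bar w$. What is lost is exactly the residue field of the coarsest coarsening $w$ of $v$ with $w(\varpi)>0$, which carries the valuation ring $(\Oo_v/(\varpi))_{\mathrm{red}}$ and value group $\Delta_0$. So Step 1 asks for two incompatible things, and beyond that it is only announced.

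What you actually need is an embedding of $(K'u',\bar v')$ into $(K^*u^*,\bar v^*)$ over $Ku$, compatible with a chosen elementary embedding $\Oo_{v'}/(\varpi)\to\Oo_{v^*}/(\varpi)$. Moreover, the value-group map of the resulting embedding $K'\to K^*$ must again be elementary; only then can the embedding argument be iterated to give elementarity. That map is forced on $\Delta_1$ by the ring embedding and comes from the coarse part on the quotient, and you never explain how to glue the two. This gluing is where the two hypotheses in (ii) must interact. It is also why the value-group hypothesis can be dropped in the regularly dense case, where every coset of $n\cdot vK$ meets $(0,v\varpi)$.

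Untilting might be made to work for the $\varpi$-adically complete ring $\widehat{\Oo_{\bar v}}$, which is an integral perfectoid ring regardless of rank. Even then you would have to check that the map on $\Oo/(\varpi)$ induces the map on $\Oo/(p)$ needed for compatibility of Fontaine's $\theta$.

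Step 2 also has a wrong case split. In mixed characteristic, $p$ is a unit in $\Oo_v[\varpi^{-1}]$ only if $v(p)\in\Delta_1$; otherwise $u$ has residue characteristic $p$ and need not be tame. Here is a concrete instance:
\begin{itemize}
\item Let $F$ be algebraically closed with a valuation of value group $\mathbb{Q}\oplus\mathbb{Q}$ (lexicographic) and $v(p)=(1,0)$.
\item Let $K=F((t))$ carry the composite valuation, so $vK=\mathbb{Z}\oplus\mathbb{Q}\oplus\mathbb{Q}$ and $v(p)=(0,1,0)$.
\item Pick $\varpi\in F$ with $v\varpi=(0,0,1)$.
\end{itemize}
All hypotheses of the theorem hold: $\Oo_v/(p)\cong\Oo_F/(p)$ is semiperfect, and $u$ is defectless, hence algebraically maximal. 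Yet $u$ has mixed characteristic and value group $\mathbb{Z}\oplus\mathbb{Q}$, so it is neither equicharacteristic $0$ nor tame, and neither of your bullets applies. In this situation you must first split off the equicharacteristic-$0$ coarsening and then use an embedding lemma for the resulting roughly tame coarse part.
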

Note that, as discussed at the beginning of the section,
Theorem \ref{thm:perfectoidAKE} applies in particular to any real-valued field in $\mathcal{C}_p^\mathrm{val}$.
As a minor application, we 
see immediately that any real-valued field $(K,v) \in \mathcal{C}_p^\mathrm{val}$ is
$\Lval$-elementarily equivalent to a perfectoid field:
\begin{cor} \label{cor:complete}
\begin{enumerate}
    \item
Let $(K,v) \in \mathcal{C}_p^\mathrm{val}$ such that $vK \leq \mathbb{R}$ and let $(K',v')$ be its completion. Then $(K',v')$ is perfectoid and the embedding $(K,v) \subseteq (K',v')$ is elementary.
\item
Conversely, if $(K',v')$ is perfectoid and $(K_0,v_0)\subseteq (K',v')$ is any subfield such that 
the extension is immediate, i.e., we have $K_0v_0 = Kv$ and $v_0K_0=Kv$. Then,
the extension $(K,v) \coloneqq (K_0^\mathrm{perf},v_0)^h \subseteq (K',v')$ is elementary.
\end{enumerate}
\end{cor}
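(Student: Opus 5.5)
Both parts will follow from Theorem \ref{thm:perfectoidAKE}. I take a single $\varpi$ with $0<v(\varpi)$, verify its hypotheses for the extension in question, and then check condition \ref{condition2}. The point that makes this work is the one recorded at the start of the section: for a real-valued field, $\Oo_v[\varpi^{-1}]=K$ for every $\varpi\in\mathfrak{m}_v\setminus\{0\}$. So the relevant coarsening is the trivial valuation, which is automatically algebraically maximal. Regular non-discrete subgroups of $\mathbb{R}$ are dense, hence regularly dense, so the "moreover" clause lets me drop the value group condition. Still, I will note that it also holds directly.

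\emph{Part (1).} Since $vK\le\mathbb{R}$, the completion $(K',v')$ is a real-valued immediate extension of $(K,v)$: we have $v'K'=vK$ and $K'v'=Kv$. It is complete, and its value group is non-discrete, so it remains to show that $\Oo_{v'}/(p)$ is semiperfect. For this I use the standard identification $\Oo_{v'}/(\varpi)\cong\Oo_v/(\varpi)$ for any $\varpi\in\mathfrak{m}_v\setminus\{0\}$. It holds because $\Oo_v$ is dense in $\Oo_{v'}$: every $x\in\Oo_{v'}$ lies within distance $v(\varpi)$ of some element of $\Oo_v$, and $\varpi\Oo_{v'}\cap\Oo_v=\varpi\Oo_v$. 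Taking $\varpi=p$ in mixed characteristic shows that $\Oo_{v'}/(p)$ is semiperfect. In equal characteristic $p$ the ring $\Oo_{v'}/(p)$ is just $\Oo_{v'}$, and I instead show $K'$ is perfect. Semiperfectness of $\Oo_v$ means $K$ is perfect, and the completion of a perfect real-valued field of characteristic $p$ is perfect, since Frobenius is an isometry up to scaling and $p$-th roots of Cauchy sequences are Cauchy. Hence $(K',v')$ is perfectoid.

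Both fields are henselian: $K$ by assumption, and $K'$ because it is complete of rank one. Both coarsenings $\Oo[\varpi^{-1}]$ are trivial, and the identification above gives $\Oo_v/(\varpi)=\Oo_{v'}/(\varpi)$, so $\preceq$ holds trivially. Also $vK=v'K'$. Theorem \ref{thm:perfectoidAKE} then gives $(K,v)\preceq(K',v')$.

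\emph{Part (2).} Here I read the hypothesis as $K_0v_0=K'v'$ and $v_0K_0=v'K'$. Let $(K,v)=(K_0^{\mathrm{perf}},v_0)^h$, taken inside $K'$. This is possible because $K'$ is henselian, and, in the relevant case, perfect. I must place $(K,v)$ in $\mathcal{C}_p^\mathrm{val}$. It is henselian with value group $v'K'$, because the extension sandwiched between $K_0$ and $K'$ is immediate. That value group is regular, as it is archimedean, and non-discrete. The main obstacle is semiperfectness of $\Oo_v/(p)$.

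My approach is to reuse the identification from part (1). The extension $K\subseteq K'$ is immediate and rank one, so $K$ is dense in $K'$. Density holds because an immediate extension of a rank-one field lies in its completion, and here $\widehat K=K'$, since $K'$ is complete and contains $K$ with the same value group and residue field. This last point requires care. If density holds, then $\Oo_v/(p)\cong\Oo_{v'}/(p)$, which is semiperfect, and part (1)'s argument with Theorem \ref{thm:perfectoidAKE} finishes the proof.

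If density is problematic, I fall back on a direct argument. Take $x\in\Oo_v$. Semiperfectness of $\Oo_{v'}/(p)$ gives $y\in\Oo_{v'}$ with $y^p\equiv x$ mod $p$. Approximate $y$ by an element of $K$ using henselianity, via Newton approximation of a root of $Y^p-x-pz$. Then $K=K_0^{\mathrm{perf},h}$ already contains the needed roots up to $p$.
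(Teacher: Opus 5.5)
Your part (1) is correct and follows the paper's argument: density gives $\Oo_v/(\varpi)=\Oo_{v'}/(\varpi)$, and then Theorem \ref{thm:perfectoidAKE} applies with trivial coarsenings. Your explicit check that $K'$ is perfectoid, including perfectness of the completion in characteristic $p$, fills in what the paper leaves implicit.

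Part (2), however, has a genuine gap exactly where you said it "requires care". It is not true that an immediate extension of a rank-one field lies in its completion. Nor is it true that a complete immediate extension must be the completion: the spherical completion of $\mathbb{C}_p$ is a complete immediate extension of the already complete field $\mathbb{C}_p$. The paper's own proof relies on the same assertion ("an immediate complete extension \dots\ is in fact its completion"), so you are following it. Without density you have neither $\Oo_v/(\varpi)=\Oo_{v'}/(\varpi)$ nor $\Oo_v/(\varpi)\preceq\Oo_{v'}/(\varpi)$, and in fact (2) is false as stated.

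Here is a counterexample. Let $K'=\mathbb{F}_p^{\mathrm{alg}}((t^{\mathbb{Q}}))$ be the Hahn series field; it is complete and perfect with value group $\mathbb{Q}$, hence perfectoid. Let $K_0=\mathbb{F}_p^{\mathrm{alg}}(t^q : q\in\mathbb{Q})$, which is perfect and immediate in $K'$. The closure of $K_0$ in $K'$ is complete of rank one, hence henselian, so $K=K_0^h$ lies inside it. Every element of that closure agrees, in all exponents $\le 0$, with a Laurent series in some $t^{1/m}$, so it has only finitely many exponents in $(-\infty,0]$. But the roots of $X^p-X-t^{-1}$ in $K'$ are $\theta+i$ with $i\in\mathbb{F}_p$ and $\theta=\sum_{n\ge1}t^{-1/p^n}$, and these have infinitely many such exponents. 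Hence $K\models\neg\exists x\,(x^p-x=t^{-1})$ while $K'$ satisfies $\exists x\,(x^p-x=t^{-1})$. So $K\not\preceq K'$ even in $\Lring$, and by Theorem \ref{thm:perfectoidAKE} condition \ref{condition2} must fail here.

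Your fallback does not repair this. Approximating $y\in\Oo_{v'}$ by elements of $K$ is again a density statement, since henselianity of $K$ only improves approximate roots that already lie in $K$. And even a correct proof that $\Oo_v/(p)$ is semiperfect would only place $K$ in $\mathcal{C}_p^\mathrm{val}$; it would not yield \ref{condition2}.

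What is missing is the extra hypothesis that $K_0$ be dense in $K'$, i.e.\ $K'=\widehat{(K_0,v_0)}$. Under it your first route works verbatim. In characteristic $p$, $K$ is perfect as a separable algebraic extension of $K_0^{\mathrm{perf}}$; in mixed characteristic, take $\varpi=p$.
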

\begin{proof} 
Note that if $vK \leq \mathbb{R}$, and $(K',v')$ is the completion of $(K,v)$, and so $(K,v)$ is
dense in $(K',v')$. In particular, for any $\gamma \in vK$ and $a \in K'$ there is $b\in K$ with
$v'(a-b) > \gamma$. 
Thus, for
any $\varpi \in\mathfrak{m}_v\setminus \{0\}$ we get
$\Oo_{v'}/(\varpi) = \Oo_v/(\varpi)$: we get a canonical 
embedding of $\Oo_v/(\varpi)$ into $\Oo_{v'}/(\varpi)$ since 
$\varpi \Oo_{v'}\cap \Oo_v = \varpi \Oo_v$, and this map is surjective as given 
$a \in \Oo_{v'}$, any $b \in \Oo_v$ with
$v'(a-b) > v(\varpi)$ is equivalent to $a$ modulo $\varpi \Oo_{v'}$.
Hence, both (i) and (ii) follow immediately from Theorem \ref{thm:perfectoidAKE}.
For (ii), note that as $(K',v')$ is an immediate complete extension of $(K,v)$ (which in turn embeds to $(K',v')$ by the universal property of the henselization), 
it is in fact its completion.
\end{proof}

Thus, finding real-valued models of the $\Lval$-theory of
any $(K,v) \in \mathcal{C}^\mathrm{val}_{p,\star}$ would answer Question \ref{Q}.

\section{Perfectoid fields in the language of rings}
\label{sec:ring}
The aim of this section is to characterize the $\Lring$-theory generated by the
class of perfectoid fields of residue
characteristic $p$. 
Throughout this section, we use the following to show that certain classes
of fields are $\Lring$-elementary:
\begin{fact}[{\cite[Theorem 4.1.12]{changkeisler}} and {\cite[Exercise 4.1.17]{changkeisler}}] Let $\mathcal{L}\subseteq \mathcal{L}'$ \label{fact}
be first-order languages.
\begin{enumerate}
    \item
    A class of $\mathcal{L}$-structures is elementary if and only if it is closed under
    elementary equivalence and ultraproducts. 
    \item Let $\mathcal{C}'$ be an elementary class of $\mathcal{L}'$-structures
    and let $\mathcal{C}$ be class of all reducts of elements of $\mathcal{C}'$ to $\mathcal{L}$.
    Then $\mathcal{C}$ is closed under ultraproducts.
\end{enumerate}
\end{fact}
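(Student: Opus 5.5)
For part (1) I would use Łoś's theorem for one direction and an ultraproduct of models of finite fragments of a complete theory for the other. If $\mathcal{C}=\mathrm{Mod}(T)$, then $\mathcal{C}$ is closed under elementary equivalence, since membership depends only on which sentences hold. It is closed under ultraproducts because, by Łoś's theorem, every sentence of $T$ true in all factors is true in the ultraproduct.

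For the converse, assume $\mathcal{C}$ is closed under elementary equivalence and ultraproducts, and put $T=\mathrm{Th}(\mathcal{C})$, the set of sentences true in every member of $\mathcal{C}$. Trivially $\mathcal{C}\subseteq\mathrm{Mod}(T)$, so let $M\models T$; I must show $M\in\mathcal{C}$.
\begin{enumerate}
    \item Let $I$ be the set of finite subsets $\Delta\subseteq\mathrm{Th}(M)$. For each $\Delta\in I$ there is some $N_\Delta\in\mathcal{C}$ with $N_\Delta\models\bigwedge\Delta$. Otherwise $\neg\bigwedge\Delta$ would hold in every member of $\mathcal{C}$, hence lie in $T$, contradicting $M\models T$ and $M\models\bigwedge\Delta$.
    \item For $\varphi\in\mathrm{Th}(M)$ set $X_\varphi=\{\Delta\in I: \varphi\in\Delta\}$. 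These sets have the finite intersection property, since $\{\varphi_1,\dots,\varphi_n\}\in X_{\varphi_1}\cap\dots\cap X_{\varphi_n}$. So they extend to an ultrafilter $\mathcal{U}$ on $I$.
    \item By Łoś's theorem, $\prod_{\Delta} N_\Delta/\mathcal{U}$ satisfies every $\varphi\in\mathrm{Th}(M)$, because $\{\Delta: N_\Delta\models\varphi\}\supseteq X_\varphi\in\mathcal{U}$. Since $\mathrm{Th}(M)$ is complete, the ultraproduct is elementarily equivalent to $M$.
    \item The ultraproduct lies in $\mathcal{C}$ by closure under ultraproducts, and hence $M\in\mathcal{C}$ by closure under elementary equivalence.
\end{enumerate}

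For part (2), I would observe that forming the ultraproduct commutes with taking reducts. Let $(N_i)_{i\in I}$ be members of $\mathcal{C}$, with $N_i=N_i'|_{\mathcal{L}}$ for some $N_i'\in\mathcal{C}'$, and let $\mathcal{U}$ be an ultrafilter on $I$. The universe and the interpretations of the $\mathcal{L}$-symbols in $\prod N_i'/\mathcal{U}$ are defined coordinatewise modulo $\mathcal{U}$, exactly as in $\prod N_i/\mathcal{U}$. Therefore
$$\Bigl(\prod N_i'/\mathcal{U}\Bigr)\Big|_{\mathcal{L}}=\prod N_i/\mathcal{U}.$$
By part (1), or directly by Łoś's theorem, $\prod N_i'/\mathcal{U}\in\mathcal{C}'$, so $\prod N_i/\mathcal{U}\in\mathcal{C}$.

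The only step that is not routine is the converse direction of (1), specifically choosing the index set and the ultrafilter so that Łoś's theorem transfers all of $\mathrm{Th}(M)$ at once. The indexing by finite fragments $\Delta$ above handles this. Everything else is bookkeeping with Łoś's theorem.
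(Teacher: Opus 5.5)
Your proof is correct, and it is the standard argument behind the Chang--Keisler results the paper cites; the paper gives no proof of its own beyond that citation. For (1) you use Łoś's theorem together with an ultraproduct indexed by the finite subsets of $\mathrm{Th}(M)$, and for (2) you use the fact that forming ultraproducts commutes with taking reducts.
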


Note that a non-algebraically closed field $K$ can admit at most 
one valuation ring $\Oo_v$ such that $(K,v)$ is
perfectoid: Any two non-trivial henselian valuation rings on a field have a non-trivial common coarsening, unless the field is separably closed (\cite[Theorem 4.4.1]{EP05}). However, since a 
perfectoid valuation on a field $K$ is by definition real-valued, it never admits 
any non-trivial coarsenings. As any perfectoid field $(K,v)$ is perfect,
$K$ can only admit a different perfectoid valuation rings if it is algebraically closed.\footnote{On the other hand, if $K$ is algebraically closed, this occurs naturally: writing $(\mathbb{C}_p,v_p)$ for the completion
of the algebraic closure of $(\mathbb{Q}_p,v_p)$, for $p\neq l$ we have an isomorphism 
$\varphi: \mathbb{C}_p \cong \mathbb{C}_l$ as fields - these are algebraically closed fields of characteristic zero which have the 
same cardinality. Now $\mathbb{C}_p$ admits the perfectoid valuation rings $\Oo_{v_p}$ and $\varphi(\Oo_{v_l})$ which differ as their residue characteristics are different.}

We first prove the following characterization, which is implicit in \cite{JK25}:
\begin{prop} \label{prop:tame}
Let $L$ be any field. 
\begin{enumerate}
    \item If $L$ admits a
    nontrivial tame valuation with divisible value group and residue characteristic $p>0$, then
    $L$ is $\Lring$-elementarily equivalent to a field admitting a non-trivial 
    perfectoid valuation 
    of residue characteristic
    $p>0$.
    \item Conversely, if $L$ admits a non-trivial perfectoid valuation 
    of residue characteristic $p>0$, it is $\Lring$-elementarily equivalent to a field admitting a non-trivial tame valuation with divisible value group and residue characteristic $p>0$.
\end{enumerate}
\end{prop}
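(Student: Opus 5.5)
For (1), I would first use Lemma \ref{lem-rk1} to pass to a real-valued tame field, then complete it. Let $v$ be a nontrivial tame valuation on $L$ with divisible value group and residue characteristic $p$.

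If $\mathrm{char}(L)=p$, then $vL$ is divisible, hence regular. Lemma \ref{lem-rk1}(1) then gives $(L,v)\equiv (K,w)$ in $\Lval$ with $wK\leq\mathbb{R}$. If $\mathrm{char}(L)=0$, Lemma \ref{lem-rk1}(2) gives the same conclusion directly.

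In both cases $(K,w)$ is tame, since tameness is $\Lval$-elementary. It is therefore henselian. Its value group is divisible and nontrivial, so it is regular and non-discrete. By \cite[Theorem 1.2]{KR21}, $\Oo_w/(p)$ is semiperfect. Thus $(K,w)\in\mathcal{C}_p^\mathrm{val}$ and is real-valued.

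Corollary \ref{cor:complete}(1) then shows that its completion $(\widehat K,\widehat w)$ is perfectoid with $(K,w)\preceq(\widehat K,\widehat w)$. Taking $\Lring$-reducts gives $L\equiv K\equiv\widehat K$, and $\widehat K$ carries the perfectoid valuation $\widehat w$.

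For (2), I would go the other way: find a dense subfield of a perfectoid field that is tame, and use Corollary \ref{cor:complete}(2) to show it is an elementary substructure. Let $(L,v)$ be perfectoid. The natural candidate is $K=(K_0^\mathrm{perf},v_0)^h$ for a subfield $K_0\subseteq L$ with immediate extension $(K_0,v_0)\subseteq(L,v)$, chosen so that $K$ is tame with divisible value group. The main obstacle is choosing $K_0$ so that tameness holds.

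The cleanest route I would try first avoids the general problem by using the tilt/deeply ramified theory from \cite{JK25}. Every perfectoid field is elementarily equivalent, via Theorem \ref{thm:perfectoidAKE}, to fields of the following form. Pass to an $\aleph_1$-saturated elementary extension $(L^*,v^*)$ and take the coarsening $u$ with $\Oo_u=\Oo_{v^*}[\varpi^{-1}]$. It is henselian of residue characteristic $0$ if $\mathrm{char}\,L=0$, and algebraically maximal by $(\star_\varpi)$. Its core field $L^*u$ is a spherically complete rank-one valued field whose residue ring mod $\varpi$ equals $\Oo_v/(\varpi)$. Hence it is a perfectoid field which, being maximal with perfect residue field and $p$-divisible value group, is tame by Kuhlmann's characterization.

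In residue characteristic $0$, an algebraically maximal $u$ is tame. Therefore $v^*=\bar w\circ u$ is a composition of tame valuations and is itself tame. It remains to make the value group divisible. Following \cite{JK25}, I would replace $(L^*,v^*)$ by an elementarily equivalent one in which $u$ has divisible value group. One way is to take a tame elementary extension of $(L^*,u)$ in the residue-characteristic-$0$ setting with the same residue field and divisible $uL^*$, and use AK/E for equicharacteristic $0$. The rank-one part $v^*L^*/\,uL^*\cong\Gamma\otimes\mathbb{Q}$ is already divisible, since the value group of a perfectoid field is $p$-divisible, and I would adjust it by an $\Loag$-elementary step as well.

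In characteristic $p$ none of this is needed: $L$ is perfect, and by the completion argument $L\equiv$ (the real-valued) maximal immediate extension, which is tame. I would then further replace its value group by its divisible hull using \cite[Theorem 1.4]{Kuh16}. I expect that final divisibility adjustment in mixed characteristic to be the delicate step.
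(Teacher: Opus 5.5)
Part (1) is correct and is essentially the paper's argument. You get $(K,w)\preceq(\widehat K,\widehat w)$ from Corollary~\ref{cor:complete}(1); the paper gets it from Kuhlmann's theory of tame fields. Part (2), however, has a genuine gap. In every branch you try to make the perfectoid valuation itself, up to $\Lval$-equivalence, tame with divisible value group, and this is impossible in general. Both properties are $\Lval$-elementary and $(L^*,v^*)\equiv(L,v)$. Yet $\widehat{\mathbb{F}_p((t))^{\mathrm{perf}}}$ has an immediate Artin--Schreier extension (see the discussion before Theorem~\ref{thm:perfectoidAKE}), so it is not tame, and $\widehat{\mathbb{Q}_p(p^{1/p^\infty})}$ has value group $\mathbb{Z}[1/p]$.

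So the conclusion ``$v^*=\bar w\circ u$ is tame'' is false, and your opening plan (a tame, dense, $\Lval$-elementary subfield) cannot work either. The characteristic $p$ branch fails for the same reason. For $L=\widehat{\mathbb{F}_p((t))^{\mathrm{perf}}}$ the maximal immediate extension $M$ is tame while $L$ is not, so $L\not\equiv M$ in $\Lval$. Corollary~\ref{cor:complete}(2) does not help: it needs $K_0$ dense in $K'$, since its proof identifies $K'$ with the completion, whereas $L$ is complete and hence closed in $M$. Also, \cite[Theorem 1.4]{Kuh16} only compares tame fields with $\Loag$-equivalent value groups, so it cannot trade $\mathbb{Z}[1/p]$ for $\mathbb{Q}$.

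The concrete faulty step is ``the core field $L^*u$ is rank-one''. The valuation that $v^*$ induces on $L^*u$ has value group $\Delta$, the convex hull of $v^*(\varpi)$. By saturation, $\Delta$ contains the nontrivial convex subgroup $I$ of elements infinitesimal relative to $v^*(\varpi)$. Only its rank-one coarsening, with value group $\Delta/I\cong\mathbb{R}$, is spherically complete and tame. Now $v^*L^*/I$ is divisible, being an extension of the divisible group $v^*L^*/\Delta$ (by regularity) by $\Delta/I$. Hence $I$ is divisible if and only if $\Gamma$ is. Your group $\Gamma\otimes\mathbb{Q}$ is not $\Delta$, and $p$-divisible does not mean divisible. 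So the non-divisibility of $\Gamma$ lives in the part of $v^*$ with value group $I$, and so does the non-tameness of $v^*$, because the coarsening below is tame. No AK/E step applied to $u$ can reach this part, since $u$'s value group $v^*L^*/\Delta$ is already divisible.

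The missing idea is that only the $\Lring$-theory must be preserved, so you should \emph{coarsen}. Let $v^+$ be the coarsest coarsening of $v^*$ with $v^+(\varpi)>0$, i.e.\ the coarsening by $I$; this is the valuation $w$ in the paper's proof. It is exactly the composite of your $u$ with the rank-one core valuation. Its value group $v^*L^*/I$ is divisible, as shown above. It is also tame: the paper quotes \cite[Main Theorem (I)]{JK25}, but your own ingredients suffice, as in Claim~2 of the proof of Theorem~\ref{thm:main}. First, $u$ is tame: in mixed characteristic it is henselian of residue characteristic $0$; in characteristic $p$ it is algebraically maximal by $(\star_\varpi)$, with perfect residue field and divisible value group. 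Second, the core valuation is spherically complete with perfect residue field and value group $\mathbb{R}$. Third, composites of tame valuations with divisible value groups are tame. Since $L^*\equiv_{\Lring}L$, this proves (2) in both characteristics at once.
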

\begin{pen}
\begin{enumerate}
    \item 
    Assume that $L$ admits a nontrivial tame henselian valuation $v$
    with divisible value group and residue characteristic $p>0$. By Lemma \ref{lem-rk1}, there
    is some $(M,w)\equiv (L,v)$ such that $wM \leq \mathbb{R}$. Since completions are
    immediate extensions and the completion of a rank-$1$ tame field is again tame, by \cite{Kuh16}), we obtain $(M',w')\coloneqq \widehat{(M,w)}\succeq (M,w) \equiv (L,v)$.
    Since $\Oo_{w'}/(p)$ is semiperfect by 
    \cite[Theorem 1.2]{KR21}),  $(M',w')$ is perfectoid. Thus, $(L,w)$ is $\Lval$-elementarily
    equivalent to the perfectoid field $(M',w')$, and so
    $L$ is in particular $\Lring$-elementarily equivalent to $M'$, which admits a non-trivial
    perfectoid valuation of residue characteristic $p>0$.
    \item
    Let $(L,v)$ be perfectoid, and let $(L,v)^\mathcal{U}$ be any nonprincipal
    ultrapower. Choose
    any $\varpi \in \mathfrak{m}_v\setminus \{0\}$, and let $w$ denote the coarsest coarsening of
    $v^\mathcal{U}$ with $w(\varpi)>0$, i.e., $\mathcal{O}_w = \mathcal{O}_{v^\mathcal{U}}[1/\varpi]$. Note that, by $\aleph_1$-saturation of $L^\mathcal{U}$, 
    $w$ is necessarily non-trivial.
    By \cite[Main Theorem (I)]{JK25}, all finite extensions of $(L^\mathcal{U},w)$ are unramified, which precisely means that $w$ a 
    tame valuation
    with divisible value group of residue characteristic $p>0$. 
    This finishes the proof, as by construction 
    $L \equiv L^\mathcal{U}$ as $\Lring$-structures.
    \hfill$\Box$
    \end{enumerate}
\end{pen}

We now apply the above to give our first characterization of 
the $\Lring$-elementary class generated by all fields admitting a perfectoid valuation
of residue characteristic $p>0$.
In \cite{AJ15}, fields $\Lring$-elementarily equivalent to fields admitting nontrivial tame valuations with divisible value group were called \emph{t-henselian of divisible tame type}.

\begin{remark}
    The class of fields
     \begin{align*}
    \mathcal{C}_p^\mathrm{ring} = \{ K \mid \;& K \equiv_{\Lring} L\textrm{ for some $L$ admitting a non-trivial
     valuation
     $w$ such that}\\ &\textrm{$(L,w)$ tame, $wL$ divisible, }\mathrm{char}(Lw)=p \}
      \end{align*}
    is $\Lring$-elementary: by definition it is closed under elementary equivalence. As admitting a nontrivial
    tame henselian valuation with divisible value group and residue characteristic $p>0$ is
    $\Lval$-axiomatizable, $\mathcal{C}_p^\mathrm{ring}$
    is also closed under ultraproducts. Thus, $\mathcal{C}_p^\mathrm{ring}$ is indeed
    an $\Lring$-elementary class (cf.~Fact \ref{fact}).
    By Proposition \ref{prop:tame}, $\mathcal{C}_p^\mathrm{ring}$ is indeed the $\Lring$-elementary class generated
    by all perfectoid fields of residue characteristic $p>0$.
\end{remark}

As a straightforward consequence of Proposition \ref{prop:tame}, we obtain:
\begin{cor} \label{UltraProdPerfElemRing}
    Any ultraproduct of fields admitting non-trivial perfectoid valuations 
    of fixed residue characteristic $p>0$
    is $\Lring$-elementarily equivalent to a perfectoid field.
\end{cor}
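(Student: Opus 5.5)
Let $(L_i)_{i\in I}$ be fields, each admitting a non-trivial perfectoid valuation $v_i$ of residue characteristic $p$, and let $L^\ast=\prod_{\mathcal{U}} L_i$ be an ultraproduct. The plan is to pass from perfectoid to tame valuations, take the ultraproduct on the tame side, where everything is first-order in $\Lval$, and then return to a perfectoid field.

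\textbf{Step 1 (replace each factor).} By Proposition \ref{prop:tame}(2), each $L_i$ is $\Lring$-elementarily equivalent to a field $M_i$ carrying a non-trivial tame valuation $w_i$ with divisible value group and $\mathrm{char}(M_iw_i)=p$.

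\textbf{Step 2 (compare the ultraproducts).} Ultraproducts respect elementary equivalence factorwise: by Łoś's theorem, $\prod_{\mathcal{U}} L_i \equiv \prod_{\mathcal{U}} M_i$ in $\Lring$, since for each sentence the set of indices where it holds in $L_i$ equals the set where it holds in $M_i$.

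\textbf{Step 3 (the ultraproduct is tame).} Consider the valued ultraproduct $(M^\ast,w^\ast)=\prod_{\mathcal{U}}(M_i,w_i)$. Being tame (cf.~\cite[Section 7.1]{Kuh16}), having divisible value group, being non-trivial, and having residue characteristic $p$ (the single sentence $p\in\mathfrak{m}$ together with $\Oo\neq K$) are all $\Lval$-axiomatizable. Hence, by Łoś, $(M^\ast,w^\ast)$ is a non-trivial tame field with divisible value group and residue characteristic $p$. This is exactly the ultraproduct-closure argument from the preceding remark, i.e.\ Fact \ref{fact}(2) applied to this class.

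\textbf{Step 4 (return to a perfectoid field).} By Proposition \ref{prop:tame}(1), $M^\ast$ is $\Lring$-elementarily equivalent to a field $N$ admitting a non-trivial perfectoid valuation of residue characteristic $p$. Chaining the equivalences gives $L^\ast\equiv M^\ast\equiv N$.

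The argument is essentially bookkeeping. The only point requiring care is Step 3, namely that tameness is an elementary property, so that it survives the ultraproduct; this is cited from Kuhlmann and is not proved here. Alternatively, one can simply note that $\mathcal{C}_p^\mathrm{ring}$ is elementary by the preceding remark and therefore closed under ultraproducts, and then apply Proposition \ref{prop:tame}(1) to the resulting member $L^\ast$.
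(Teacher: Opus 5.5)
Your argument is correct and is essentially the paper's. The paper states the corollary as an immediate consequence of Proposition \ref{prop:tame} together with the preceding remark that $\mathcal{C}_p^\mathrm{ring}$ is elementary, hence closed under ultraproducts; your Steps 2 and 3 just make that ultraproduct-closure explicit before you apply Proposition \ref{prop:tame}(1).
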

Whether this generalizes to $\Lval$ remains open:
\begin{Q}
 Is any ultraproduct of perfectoid valued fields of fixed residue characteristic $p>0$ 
 already $\Lval$-elementarily equivalent to a perfectoid field $(K,v)$?
\end{Q}

We now show that for fields of characteristic $0$, the $\Lring$-reduct of the class 
$\mathcal{C}_p^\mathrm{val}$ is $\Lring$-elementary and is the indeed the $\Lring$-elementary class generated
by all perfectoid fields of mixed characteristic $(0,p)$. We begin with the following well-known observations:

\begin{lemma} \label{lem-mixed}
    \begin{enumerate}
    \item Admitting a henselian valuation of mixed characteristic $(0,p)$ is $\Lring$-axiomatizable.
        \item The valuation ring of any unramified henselian valuation of mixed characteristic $(0,p)$ (i.e., $v(p)$ minimum positive) is $\Lring$-definable without parameters (by a formula depending only on $p$).
        Thus, admitting an unramified henselian valuation of mixed characteristic $(0,p)$
     is $\Lring$-axiomatizable.
     \end{enumerate}
\end{lemma}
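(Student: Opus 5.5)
For both parts I would use Fact \ref{fact}: a class of fields is $\Lring$-elementary iff it is closed under elementary equivalence and ultraproducts. The class of reducts of an $\Lval$-elementary class is already closed under ultraproducts by Fact \ref{fact}(2). So the remaining work is closure under $\Lring$-elementary equivalence. For (2), I would instead give an explicit uniform definition of the valuation ring.

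For (1), I would start from the $\Lval$-theory saying that $\Oo$ is a henselian valuation ring with $\mathrm{char}(K)=0$ and $v(p)>0$. Henselianity is a scheme over polynomial degrees, and $\mathrm{char}(K)=0$ is a scheme too. Its models form an elementary class, so the class of $\Lring$-reducts is closed under ultraproducts.

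For closure under $\Lring$-elementary equivalence, suppose $K\equiv K'$ and $K$ carries such a valuation $v$. By Keisler--Shelah we may assume $K'\cong K^{\mathcal U}$ for an ultrapower, which carries the henselian valuation $v^{\mathcal U}$ of mixed characteristic $(0,p)$. Since the property is isomorphism invariant, it transfers to $K'$.

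For (2), let $v$ be unramified henselian of mixed characteristic $(0,p)$. I would try the formula
\[
\psi(x)\colon\ \exists y\; \bigl(y^{n}=1+p\,x^{n+1}\bigr),
\]
with $n$ a fixed integer coprime to $p$ and $n\ge 2$; the choice $n=2$ when $p\neq 2$ and $n=3$ when $p=2$ is one option. The aim is to show that $\psi(K)=\Oo_v$, or at least that $\Oo_v$ is obtained from $\psi(K)$ by a simple ring operation.

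If $v(x)\ge 0$, then $1+px^{n+1}\equiv 1 \bmod \mathfrak m_v$ and $n$ is a unit, so Hensel's lemma gives an $n$-th root.

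If $v(x)<0$, then because $v(p)$ is the minimum positive element we get $v(px^{n+1})=v(p)+(n+1)v(x)\le v(p)-(n+1)v(p)<0$. Hence $v(1+px^{n+1})=v(p)+(n+1)v(x)$. I need this not to be divisible by $n$ in $vK$. Modulo $n\,vK$ it equals $v(p)+v(x)$, which may or may not be divisible by $n$, so this formula alone may fail. This is the main obstacle.

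The standard fix is to test two elements. Alternatively, one can use that $y^n=1+p x^{n}$ solvable forces $v(p)\in n\,vK$, which is impossible when $v(p)$ is minimal positive. So I would take
\[
\psi(x)\colon\ \exists y\;\bigl(y^{n}=1+p\,x^{n}\bigr),
\]
and argue as follows. For $v(x)\ge0$, Hensel's lemma gives a solution. For $v(x)<0$, the value $v(1+px^n)=v(p)+n v(x)$ is not in $n\,vK$, since $v(p)\notin n\,vK$ (its $n$-th fraction would be a positive element below the minimum). So there is no solution.

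Thus $\psi$ defines $\Oo_v$, uniformly, depending only on $p$. Admitting such a valuation is then axiomatized by the $\Lring$-sentences stating that $\psi(K)$ is a henselian valuation ring, with $p$ having minimal positive value (no $z$ with $0<v(z)<v(p)$ expressed via $\psi$), and $\mathrm{char}=0$.
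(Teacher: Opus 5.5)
Your part (2) is correct and matches the paper: with $n=2$ for $p\neq 2$ and $n=3$ for $p=2$, your formula $\exists y\,(y^n=1+px^n)$ is exactly Robinson's formula used there. Obtaining the axiomatization by substituting it for $\Oo$ in the $\Lval$-axioms is also fine.

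The gap is in part (1), in the step for closure under $\Lring$-elementary equivalence. The Keisler--Shelah theorem does not let you assume $K'\cong K^{\mathcal U}$. What it gives is an ultrafilter $\mathcal U$ with $K^{\mathcal U}\cong (K')^{\mathcal U}$. Indeed, every ultrapower of $K$ contains a diagonal copy of $K$, so if $K$ is uncountable, a countable $K'\equiv K$ is not isomorphic to any ultrapower of $K$.

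So your argument only shows that $(K')^{\mathcal U}$ admits a henselian valuation of mixed characteristic $(0,p)$. In effect you have handled closure under ultrapowers. The direction that actually needs proof is the descent from $(K')^{\mathcal U}$ to $K'$, and that is missing. The paper supplies it as follows:
\begin{enumerate}
\item Transport $v^{\mathcal U}$ to $(K')^{\mathcal U}$ and restrict it to the diagonal copy of $K'$.
\item Since $K'\preceq (K')^{\mathcal U}$, the field $K'$ is relatively algebraically closed in $(K')^{\mathcal U}$: a nonzero polynomial over $K'$ has the same finitely many roots in both.
\item The restriction of a henselian valuation to a relatively algebraically closed subfield is again henselian: a root produced by Hensel's lemma in $(K')^{\mathcal U}$ is algebraic over $K'$, hence lies in $K'$.
\item The restricted valuation is nontrivial of mixed characteristic $(0,p)$, since $\mathrm{char}(K')=0$ and $p\in K'$ still has positive value.
\end{enumerate}
With this step added, your proof of (1) coincides with the paper's, including the use of Fact~\ref{fact}(2) for ultraproducts.
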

\begin{proof}
  \begin{enumerate}
      \item 
    Admitting a henselian valuation of mixed characteristic $(0,p)$ is preserved
    under $\Lring$-elementary equivalence (this is also shown in \cite[Proposition 2.1]{AJ15}): assume $K \equiv L$ and $K$ admits a henselian valuation $v$ of mixed characteristic. By the 
    Keisler-Shelah Theorem (\cite[Theorem 6.1.15]{changkeisler}), there is an ultrafilter $\mathcal{U}$ with $K^\mathcal{U} \cong
    L^\mathcal{U}$ as fields.
    Note that $K^\mathcal{U}$
    also admits a henselian valuation of mixed characteristic (namely $v^\mathcal{U}$). Restricting $v^\mathcal{U}$ to $L$ gives a henselian valuation 
    (since $L$ is relatively algebraically closed in the ultrapower), which is of mixed characteristic (as $p \in \mathfrak{m}_{v^\mathcal{U}}$). Thus, admitting a henselian
    valuation of mixed characteristic is preserved under $\Lring$-elementary equivalence.
    Since the class of henselian valued fields of mixed characteristic $(0,p)$ is
    $\mathcal{L}_\mathrm{val}$-elementary, its reduct to
    $\Lring$ is closed under ultraproducts by Fact \ref{fact}(2). Thus,
    admitting a henselian valuation of mixed characteristic $(0,p)$ is 
    $\Lring$-axiomatizable by Fact \ref{fact}(1).
\item
    Any mixed characteristic henselian valuation $(K,v)$ with $v(p)$ minimum positive is
    $\Lring$-definable by a classical result of Robinson:
    For $p\neq 2$, the formula $\varphi_p(x): (\exists y) [1 + px^2 = y^2]$
    defines $\mathcal{O}_v$, for $p=2$, $\Oo_v$ is definable by
    $\varphi_2(x): (\exists y) [1 + 2x^3 = y^3]$.
    Thus, if $K$ admits a henselian valuation $v$ of mixed 
    characteristic $(0,p)$ with $v(p)$ minimum positive, then so does any $L\equiv K$:
    the set defined by $\varphi_p(x)$ in $L$ is the valuation ring of a valuation $w$
    with $(K,v) \equiv (L,w)$.
    Moreover, admitting a henselian valuation of mixed characteristic $(0,p)$ which is unramified
    is preserved under ultraproducts by Fact \ref{fact}(2). We conclude once again by Fact \ref{fact}(1).
  \end{enumerate}
\end{proof}

Note that if $v$ is a valuation of mixed characteristic on $K$ and $vK$ has a minimum positive element, $\Oo_v/(p)$ is semiperfect implies that $v(p)$ is minimum positive in $vK$, since
any $\gamma\in vK$ with $0\leq \gamma <v(p)$ is $p$-divisible by semiperfectness. 
In this case
it may happen that $K$ admits two henselian valuations of mixed characteristic $v$ and $w$ 
such that $v$ is unramified and $\Oo_w \subseteq \Oo_v$ holds with $\Oo_v/(p)$ semiperfect
but $\Oo_w/(p)$ not semiperfect (see \cite[Remark 4.16]{GKS26} for an explicit example).
This is however the only obstruction:
\begin{prop} \label{prop:o/p}
    Let $K$ be a field and $p$ a prime, and assume that $K$ admits no unramified henselian
    valuation of mixed characteristic $(0,p)$. Then $\mathcal{O}_v/(p)$ is semiperfect for
    \emph{some} henselian valuation $v$ on $K$ with $\mathrm{char}(Kv)=p$ if and only if 
    $\mathcal{O}_v/(p)$ is semiperfect for
    \emph{all} henselian valuations $v$ on $K$ with $\mathrm{char}(Kv)=p$.
\end{prop}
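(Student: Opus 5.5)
The plan is to reduce to two comparable henselian valuations and compare their residue rings modulo $p$ directly. Two trivial cases come first. If $\mathrm{char}(K)=p$, then $\Oo_v/(p)=\Oo_v$, and this is semiperfect if and only if $K$ is perfect, since $\Oo_v$ is integrally closed. That condition does not depend on $v$. If $K$ is separably closed and of characteristic $0$, then $K$ is algebraically closed, so every element of $\Oo_v$ has a $p$-th root in $\Oo_v$ and all $\Oo_v/(p)$ are semiperfect. So we may assume $\mathrm{char}(K)=0$ and $K$ not separably closed. By \cite[Theorem 4.4.1]{EP05}, any two henselian valuations $v,w$ with $\mathrm{char}(Kv)=\mathrm{char}(Kw)=p$ are then comparable, say $\Oo_w\subseteq\Oo_v$. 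Both are of mixed characteristic $(0,p)$, and $\Oo_v$ is the localization of $\Oo_w$ at the prime $\mathfrak m_v$. Note that $p\in\mathfrak m_v$, hence $p\Oo_v\subseteq\mathfrak m_v\subseteq\Oo_w$. It suffices to show that $\Oo_w/(p)$ is semiperfect if and only if $\Oo_v/(p)$ is.

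\emph{From the finer to the coarser valuation.} Assume $\Oo_w/(p)$ is semiperfect and let $x\in\Oo_v$. Write $x=a/s$ with $a\in\Oo_w$ and $s\in\Oo_w\setminus\mathfrak m_v$. Choose $\alpha,\sigma\in\Oo_w$ with $a\equiv\alpha^p$ and $s\equiv\sigma^p \pmod{p\Oo_w}$. Since $v(s)=0<v(p)$, we get $v(\sigma)=0$. Then
\[
x-(\alpha/\sigma)^p=\frac{a\sigma^p-s\alpha^p}{s\sigma^p}=\frac{a(\sigma^p-s)-s(\alpha^p-a)}{s\sigma^p}\in p\Oo_v,
\]
so $\Oo_v/(p)$ is semiperfect. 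This direction does not use the hypothesis of the proposition.

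\emph{From the coarser to the finer valuation.} Assume $\Oo_v/(p)$ is semiperfect and let $x\in\Oo_w$. Pick $y\in\Oo_v$ with $x-y^p\in p\Oo_v$. Since $p\Oo_v\subseteq\Oo_w$, we get $y^p\in\Oo_w$ and hence $y\in\Oo_w$. So Frobenius is surjective on $\Oo_w/I$ for the ideal $I=p\Oo_v$ of $\Oo_w$. The remaining task is to pass from $I$ down to $p\Oo_w\subseteq I$, and this is where the hypothesis enters. Since $v$ is not unramified, $v(p)$ is not minimum positive in $vK$. As observed before the proposition, semiperfectness of $\Oo_v/(p)$ then gives an element $\varpi\in\Oo_v$ with $\varpi^p=pu$ for some $u\in\Oo_v^\times$. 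To get it, apply semiperfectness to an element $c$ with $0<v(c)<v(p)$ and iterate, so that the value $v(p)/p$ is attained and a suitable $p$-th root of $p$ up to a unit and modulo $p^2$ is produced. Now write $x-y^p=pb$ with $b\in\Oo_v$. Apply semiperfectness of $\Oo_v/(p)$ to $b\,u^{-1}$ to obtain $b=u c^p+pd$ with $c,d\in\Oo_v$. This gives
\[
x=y^p+(\varpi c)^p+p^2d .
\]
Here $p^2d\in p\,\mathfrak m_v\subseteq p\Oo_w$. Also $\varpi c\in\Oo_w$, because it lies in $\mathfrak m_v$. Finally, $(y+\varpi c)^p-y^p-(\varpi c)^p$ is $p$ times a sum of products involving $\varpi c\in\mathfrak m_v\subseteq\Oo_w$, so it also lies in $p\Oo_w$. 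Hence $x\equiv(y+\varpi c)^p \pmod{p\Oo_w}$.

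\emph{Main obstacle.} I expect the hard step to be this last lifting from $p\Oo_v$ to $p\Oo_w$: both the construction of $\varpi$ with $\varpi^p\in p\Oo_v^\times$ and the bookkeeping showing that every error term lands in $p\Oo_w$ rather than only in $p\Oo_v$. This is exactly where unramifiedness must be excluded. If $v(p)$ is minimum positive, no such $\varpi$ exists, and the example of \cite[Remark 4.16]{GKS26} shows the conclusion genuinely fails. If constructing $\varpi$ exactly turns out to be awkward, I would instead run the same argument with an element $\pi\in\mathfrak m_v$ satisfying $v(p)/p\le v(\pi)<v(p)$. I would show Frobenius is surjective on $\Oo_w/\pi\Oo_w$, using $p\Oo_v\subseteq\pi\mathfrak m_v\subseteq\pi\Oo_w$, and then use the standard successive-approximation argument to pass from semiperfectness modulo $\pi$ to semiperfectness modulo $p$.
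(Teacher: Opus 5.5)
There is a genuine gap, but it is in the reduction, not in the comparison itself. Both of your comparable-case computations are correct. The problem is that \cite[Theorem 4.4.1]{EP05} does \emph{not} make two henselian valuations of residue characteristic $p$ on a non-separably closed field comparable. It only says that if $v,w$ are incomparable, their finest common coarsening $u$ has separably closed residue field; since $K$ is not separably closed, $u$ is then nontrivial. Such incomparable pairs do occur under the hypotheses of the proposition. Let $(K,u)$ be henselian of mixed characteristic $(0,p)$ with $uK=\mathbb{Q}$ and $Ku$ separably closed but not perfect; for instance, adjoin all $p^{1/n}$ to the fraction field of a Cohen ring of $\mathbb{F}_p(t)^{\mathrm{sep}}$. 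Then:
\begin{itemize}
\item $K$ is not separably closed, since a lift of a non-$p$-th power of $Ku$ has no $p$-th root in $K$.
\item Every henselian valuation $\nu$ on $K$ is comparable with $u$, because $u$ has no nontrivial proper coarsening. Moreover $\nu(p)$ is never minimum positive, so $K$ admits no unramified henselian valuation.
\item Composing $u$ with two incomparable valuations of $Ku$ gives incomparable henselian $v,w$ with $\mathrm{char}(Kv)=\mathrm{char}(Kw)=p$.
\end{itemize}
This is exactly the case the paper handles through $u$.

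You can close the gap with what you already proved. If $\mathrm{char}(Ku)=0$, then $Ku$ is algebraically closed and $\mathfrak m_u\subseteq p\Oo_w$. Hence $\Oo_w/(p)\cong\Oo_{\bar w}/(p)$ for the valuation $\bar w$ induced on $Ku$, and this ring is semiperfect. If $\mathrm{char}(Ku)=p$, then $u$ is henselian of mixed characteristic, so by hypothesis it is not unramified. You can then go up from $v$ to $u$ with your finer-to-coarser step, and down from $u$ to $w$ with your coarser-to-finer step.

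Once repaired, your proof is more elementary than the paper's. In the residue characteristic $p$ case the paper passes to $\aleph_1$-saturated ultrapowers, shows $v^+=w^+$, and invokes \cite[Lemma 7.2.19]{JK25}. Your explicit congruence $x\equiv(y+\varpi c)^p \pmod{p\Oo_w}$ needs neither saturation nor $v^+$.

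Your construction of $\varpi$ is muddled, though the claim itself is immediate. Semiperfectness makes every $\gamma\in vK$ with $0\le\gamma<v(p)$ divisible by $p$. Writing $v(p)=\gamma+(v(p)-\gamma)$ with $0<\gamma<v(p)$ then gives $v(p)/p\in vK$. Any $\varpi$ of that value satisfies $\varpi^p\in p\Oo_v^\times$.
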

\begin{proof}
The statement is clear if $\mathrm{char}(K)=p$, in which case the semiperfectness of $\Oo_v/(p)
=\Oo_v$
is equivalent to the perfectness of $K$. Thus, we now assume $\mathrm{char}(K)=0.$

If $K$ is algebraically closed, we have $\Oo_v/(p)$ is semiperfect for any valuation $v$ on $K$.
If $K$ is not algebraically closed, all henselian valuations on $K$ 
induce the same topology \cite[Theorem 4.4.1]{EP05}.
Let $v$ and $w$ be henselian valuations of mixed characteristic $(0,p)$ on $K$, and assume 
$\Oo_v/(p)$ is semiperfect. As $v$ and $w$ induce the same topology, they have a finest common coarsening $u$ which is nontrivial. We make a case distinction depending on the residue characteristic of $u$.

If $\mathrm{char}(Ku)=0$, then $u$ is a proper coarsening of both $v$ and $w$ and $Ku$ is algebraically closed (by another application of \cite[Theorem 4.4.1]{EP05}. Thus, writing 
$\bar{w}$
for the valuation induced by $w$ on $Ku$, the ring 
$\Oo_{\bar{w}}/(p)$ is semiperfect by algebraic closedness of $Ku$.
Considered as ideals of $\Oo_w$, we get $\mathfrak{m}_u\subseteq (p)$, and so the residue map corresponding to $u$ induces an isomorphism $\Oo_{\bar{w}}/(p) \cong \Oo_{{w}}/(p)$.
Hence, $\Oo_w/(p)$ is also semiperfect.

If $\mathrm{char}(Ku)=p$, we may assume that
both $(K,v)$ and $(K,w)$ are $\aleph_1$-saturated 
as, by Lemma \ref{lem-mixed}(2), the assumptions still hold when passing to an ultrapower of $(K,v,w)$.
We write $v^+$ (respectively $w^+$) to denote the
coarsest coarsening of residue characteristic $p$ of $v$ (respectively $w$).
Since both $v^+$ and $w^+$ are
(not necessarily proper) coarsenings of $u$ and 
coarsenings are
linearly ordered by inclusion, we obtain $v^+=w^+$. 
Since by assumption $K$ admits no unramified henselian valuation of mixed characteristic $(0,p)$,
$w^+K = v^+K$ has no minimum positive element.
This allows us to apply \cite[Lemma 7.2.19]{JK25} with $\varpi = p$, and we obtain
\begin{align*}
\Oo_v/(p) \textrm{ is semiperfect} \overset{\textrm{{\cite[Lemma 7.2.19]{JK25}}}}{\Longleftrightarrow} &\;\Oo_{v^+}/(p) \textrm{ is semiperfect}\\ \overset{\Oo_{v^+} = \Oo_{w^+}}{\Longleftrightarrow}\phantom{\Longrightarrow} &\;\Oo_{w^+}/(p) \textrm{ is semiperfect} \\\overset{\textrm{{\cite[Lemma 7.2.19]{JK25}}}}{\Longleftrightarrow} &\;\Oo_w/(p) \textrm{ is semiperfect,}
\end{align*}
as desired. 
\end{proof}

This gives us the following:
\begin{prop} \label{HensVal0pSemiperfElem}
    Admitting a henselian valuation $v$ 
    of characteristic $(0,p)$ with $\Oo_v/(p)$ semiperfect is $\Lring$-axiomatizable.
\end{prop}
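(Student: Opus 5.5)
We use Fact \ref{fact}: we show that the class $\mathcal{D}$ of fields admitting a henselian valuation $v$ of mixed characteristic $(0,p)$ with $\Oo_v/(p)$ semiperfect is closed under ultraproducts and under $\Lring$-elementary equivalence. Closure under ultraproducts is immediate from Fact \ref{fact}(2). Indeed, the class of henselian valued fields of mixed characteristic $(0,p)$ with $\Oo_v/(p)$ semiperfect is $\Lval$-elementary: semiperfectness is the axiom $\forall x\in\Oo\,\exists y\in\Oo\,\exists z\in\Oo:\ x = y^p + pz$.

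For closure under elementary equivalence, let $K\in\mathcal{D}$ via $v$, and let $L\equiv K$. We split into two cases according to whether $K$ admits an unramified henselian valuation of mixed characteristic $(0,p)$. This property is preserved under $\Lring$-elementary equivalence by Lemma \ref{lem-mixed}(2), so both $K$ and $L$ fall into the same case.

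\emph{Unramified case.} Suppose $K$ admits an unramified henselian valuation $u$ of mixed characteristic. Then $\Oo_u$ is defined by the parameter-free formula $\varphi_p$ of Lemma \ref{lem-mixed}(2), and $\varphi_p$ defines in $L$ a valuation ring $\Oo_{u'}$ with $(L,u')\equiv(K,u)$ in $\Lval$. The property "$\Oo_u/(p)$ semiperfect" is a single $\Lval$-sentence, hence an $\Lring$-sentence after substituting $\varphi_p$. So it suffices to show that $\Oo_u/(p)$ is semiperfect. We know $\Oo_v/(p)$ is semiperfect for the given $v$. By \cite[Theorem 4.4.1]{EP05}, either $K$ is algebraically closed, in which case every $\Oo/(p)$ is semiperfect, or $u$ and $v$ are comparable. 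The latter holds because they are henselian and $u$ has no proper nontrivial coarsening of mixed characteristic: since $u(p)$ is the minimal positive element of $uK$, every proper coarsening of $u$ has residue characteristic $0$.

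If $\Oo_v\subseteq\Oo_u$, then $u$ is the coarsening of $v$ obtained by killing the convex subgroup generated by $v(p)$'s infinitesimals. Semiperfectness then passes up: $\Oo_u/(p)$ is a quotient of a localization, and any $x\in\Oo_u$ can be written as $x = a/s$ with $s$ a $v$-unit-modulo-$\mathfrak m_u$. The cleaner argument is that $\Oo_u/\mathfrak m_u = Ku$, and since $u$ is unramified, $\Oo_u/(p)=Ku$. Semiperfectness therefore means that $Ku$ is perfect, which follows because $Ku$ is the fraction field of $\Oo_v/\mathfrak m_u$, a quotient of $\Oo_v/(p)$ and hence semiperfect.

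If $\Oo_u\subseteq\Oo_v$, then $v$ is unramified of residue characteristic $p$ as well, hence equal to $u$, since both have $v(p)$ minimal. Either way, this case closes.

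\emph{No unramified valuation.} Suppose $K$ admits no unramified henselian valuation of mixed characteristic. By Lemma \ref{lem-mixed}(1), $L$ admits some henselian valuation $w$ of mixed characteristic $(0,p)$. The task is to show that $\Oo_w/(p)$ is semiperfect. By Proposition \ref{prop:o/p} (applied to $L$, which also has no unramified one), it suffices to find one henselian valuation on $L$ of residue characteristic $p$ with semiperfect $\Oo/(p)$. Here we use Keisler--Shelah: $K^\mathcal{U}\cong L^\mathcal{U}$, and $v^\mathcal{U}$ is henselian of mixed characteristic with $\Oo/(p)$ semiperfect by \L o\'s. Moreover, $K^\mathcal{U}$ still has no unramified henselian valuation by Lemma \ref{lem-mixed}(2). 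So, by Proposition \ref{prop:o/p}, \emph{every} henselian mixed-characteristic valuation on $L^\mathcal{U}$ has semiperfect $\Oo/(p)$, in particular $w^\mathcal{U}$. By \L o\'s again, $\Oo_w/(p)$ is semiperfect. Hence $L\in\mathcal{D}$.

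The main obstacle is the unramified case: one must check carefully the comparison of $v$ with the definable unramified valuation, in particular that semiperfectness transfers along the coarsening. The rest is bookkeeping with Fact \ref{fact}, Lemma \ref{lem-mixed}, and Proposition \ref{prop:o/p}.
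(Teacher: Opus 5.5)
Your proposal follows the paper's proof step for step: closure under ultraproducts via Fact~\ref{fact}(2); a case split on whether $K$ admits an unramified henselian valuation of mixed characteristic; in the unramified case, transfer through the parameter-free formula $\varphi_p$ together with $\Oo_u/(p)=Ku=\mathrm{Frac}(\Oo_v/\mathfrak m_u)$ when $\Oo_v\subseteq\Oo_u$; and in the other case, Keisler--Shelah combined with Proposition~\ref{prop:o/p}. The second case and the transfer steps are fine as written.

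\textbf{The gap: comparability of $u$ and $v$ in the unramified case.}
\begin{itemize}
\item \cite[Theorem 4.4.1]{EP05} does not give comparability. It only says that two nontrivial henselian valuations on a non-separably closed field are dependent, i.e.\ they have a nontrivial finest common coarsening $u_0$.
\item The reason you give does not exclude incomparability. That every proper coarsening of $u$ has residue characteristic $0$ only tells you that $u_0$, if it is a proper coarsening of $u$, has residue characteristic $0$. This is not a contradiction by itself. That observation settles the \emph{direction} of the comparison once comparability is known, which is how you use it in your last subcase.
\end{itemize}

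\textbf{The missing idea, which the paper supplies.} Suppose $u$ and $v$ were incomparable.
\begin{itemize}
\item The finest common coarsening $u_0$ of two incomparable henselian valuations has separably closed residue field. Since $\mathrm{char}(Ku_0)=0$, the field $Ku_0$ is algebraically closed.
\item $u$ induces a valuation on $Ku_0$. Its value group is the convex subgroup $\Delta\leq uK$ with $u_0K\cong uK/\Delta$.
\item Because $u_0(p)=0$, we have $u(p)\in\Delta$. So $u(p)$ is the least positive element of $\Delta$, and $\Delta$ is not divisible.
\item This is impossible for a valuation on an algebraically closed field.
\end{itemize}

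Your branch "$K$ algebraically closed" is vacuous here: $uK$ has a least positive element, which cannot happen on an algebraically closed field. With the argument above inserted, your proof coincides with the paper's.
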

\begin{proof} Let $K$ be a field admitting a henselian valuation $v$ 
of mixed characteristic $(0,p)$
with $\Oo_v$ semiperfect. We first show that any $L \equiv_{\Lring} K$ admits a mixed characteristic $(0,p)$ henselian valuation with $\Oo_v/(p)$ semiperfect.

If $K$ admits an unramified henselian valuation $w$ 
of characteristic $(0,p)$, then $w$ must be a coarsening of $v$: If $w$ and $v$ were not comparable, their finest common coarsening $u$ would have separably closed residue field, but then $w$
would induce a valuation on the separably closed field $Ku$ with non-divisible value group.
As $w$ has no proper coarsenings of residue characteristic $p$, we obtain $\Oo_v \subseteq \Oo_w$.
Then, since $(p) = \mathfrak{m}_w \subseteq \mathfrak{m}_v$, the semiperfectness of $\Oo_v/(p)$ 
implies that $\Oo_w/(p) = Kw = \mathrm{Frac}(\Oo_v/(p))$ is perfect. Thus, the parameter-free 
$\Lring$-definability
of $\Oo_w$ (see Lemma \ref{lem-mixed}) 
ensures that any $L \equiv_{\Lring} K$ admits
an unramified henselian valuation $u$ with perfect residue field $Lu=\mathcal{O}_u/(p)$.

We now assume that $K$ admits no unramified henselian valuation of characteristic $(0,p)$.
Let $L \equiv_{\Lring} K$, then $L$ admits a henselian valuation $w$ of characteristic $(0,p)$
by Lemma \ref{lem-mixed}(1). Let $\mathcal{U}$ be an ultrafilter such that
$K^\mathcal{U} \cong L^\mathcal{U}$ (which exists by the Keisler-Shelah Theorem \cite[Theorem 6.1.15]{changkeisler}), then both
$v^\mathcal{U}$ and $w^\mathcal{U}$ are henselian valuations of mixed characteristic on $K^\mathcal{U}.$ By Lemma \ref{lem-mixed}(2), $K^\mathcal{U}$ admits no unramified henselian valuation of characteristic $(0,p)$, so the semiperfectness of $\Oo_{v^\mathcal{U}}/(p)$
implies that of $\Oo_{w^\mathcal{U}}/(p)$ by Proposition \ref{prop:o/p}. We conclude that 
$\Oo_w/(p)$ is semiperfect.

By Fact \ref{fact}(1), what is left to show is that admitting a henselian valuation
mixed characteristic $(0,p)$ with $\Oo_v/(p)$ semiperfect is preserved under taking ultraproducts.
As these properties are all $\Lval$-axiomatizable, this follows once again from Fact \ref{fact}(2).
\end{proof}

We are now in a position to give the desired characterization of the $\Lring$-elementary
class generated by all perfectoid fields of characteristic $(0,p)$:
\begin{thm} Let $p$ be prime. \label{thm:main}
The class of fields
\begin{align*}
\mathcal{C}_{(0,p)}^\mathrm{ring} = \{K \mid \,& K \textrm{ admits a henselian valuation of mixed characteristic } (0,p)\\ &\textrm{ with $\Oo_v/(p)$ semiperfect and $vK$ regular and non-discrete} \}    
\end{align*}
is $\Lring$-elementary and is the $\Lring$-elementary class generated by all
perfectoid fields of characteristic $(0,p)$.
\end{thm}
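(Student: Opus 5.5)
The plan has two parts. First, show that $\mathcal{C}_{(0,p)}^\mathrm{ring}$ is $\Lring$-elementary. Second, show that every member is $\Lring$-elementarily equivalent to a perfectoid field, or to an ultraproduct of perfectoid fields. Conversely, every perfectoid field of characteristic $(0,p)$ lies in the class trivially, since a real-valued, non-discrete value group is regular and non-discrete, and completeness implies henselianity.

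\textbf{Elementarity.} By Fact \ref{fact}, it suffices to show closure under ultraproducts and under $\Lring$-elementary equivalence. Closure under ultraproducts follows from Fact \ref{fact}(2), because being henselian of mixed characteristic $(0,p)$ with $\Oo_v/(p)$ semiperfect and $vK$ regular and non-discrete is $\Lval$-axiomatizable.

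For elementary equivalence, let $K\in\mathcal{C}_{(0,p)}^\mathrm{ring}$ via $v$ and let $L\equiv K$. By Keisler--Shelah choose $\mathcal{U}$ with $K^\mathcal{U}\cong L^\mathcal{U}$. Since $v^\mathcal{U}$ witnesses membership for $K^\mathcal{U}$, it suffices to descend from $L^\mathcal{U}$ to $L$. I will avoid descending the restricted valuation. Instead I argue directly that $L$ has a witness.

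By Proposition \ref{HensVal0pSemiperfElem}, $L$ admits a henselian valuation $w$ of characteristic $(0,p)$ with $\Oo_w/(p)$ semiperfect. I replace $w$ by its coarsening $w^+$ obtained by inverting $p$ appropriately, namely the coarsest coarsening with residue characteristic $p$. This keeps semiperfectness: in the unramified case as in the proof of Proposition \ref{HensVal0pSemiperfElem}, and otherwise via \cite[Lemma 7.2.19]{JK25}.

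Now $w^+L$ has the convex subgroup generated by $w(p)$ as its largest proper convex subgroup with nontrivial quotient behaviour. Regularity and non-discreteness then need to be arranged, and I expect this to be the main obstacle. An arbitrary henselian valuation need not have regular value group. The route I would try first is a case split.

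\emph{Unramified case.} If $K$ admits an unramified henselian valuation, then $v(p)$ is minimal positive, so $v$ is discrete. This contradicts membership unless $v$ is a proper refinement of the unramified valuation. I would need to check this case separately, using that the residue field is perfect.

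\emph{No unramified valuation.} Here the natural witness is the rank-one part. In an $\aleph_1$-saturated $K^\mathcal{U}$ we have the coarsening $u$ with $\Oo_u=\Oo_{v}[1/p]$. The residue field $K^\mathcal{U}u$ carries the valuation induced by $v$, with value group the archimedean-like quotient. Rather than pursuing this further, I would use Proposition \ref{prop:tame} and the Remark after it to move to tame fields.

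\textbf{Generation.} This is the cleaner path. By Proposition \ref{prop:tame} and the subsequent Remark, it suffices to show that every $K\in\mathcal{C}_{(0,p)}^\mathrm{ring}$ is $\Lring$-equivalent to a field carrying a nontrivial tame valuation with divisible value group and residue characteristic $p$.

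To do this, pass to an $\aleph_1$-saturated ultrapower $(K,v)^\mathcal{U}$ and take $w$ with $\Oo_w=\Oo_{v^\mathcal{U}}[1/p]$. This $w$ is nontrivial by saturation, since there are elements of value infinitesimally smaller than $v(p)$ in the relevant sense.

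Mimicking the proof of Proposition \ref{prop:tame}(2), I would then invoke \cite[Main Theorem (I)]{JK25}, or rather its general form for henselian fields with $\Oo/(p)$ semiperfect and non-discrete regular value group. This should show that all finite extensions of $(K^\mathcal{U},w)$ are unramified, so $w$ is tame with divisible value group. I expect the residue characteristic of $w$ to be $0$ rather than $p$ here, so the choice of coarsening must be adjusted to the convex subgroup just below $v(p)$ to get residue characteristic $p$. Checking exactly which hypotheses of the JK25 theorem are met, specifically regularity replacing the rank-one assumption, is where I anticipate the real work.

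Once this is done, the elementary class generated by the perfectoid fields contains $\mathcal{C}_{(0,p)}^\mathrm{ring}$, and elementarity of $\mathcal{C}_{(0,p)}^\mathrm{ring}$ gives the reverse inclusion.
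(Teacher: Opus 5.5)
There is a genuine gap. Your trivial inclusion, closure under ultraproducts, and the reduction of generation to Proposition~\ref{prop:tame}(1) all match the paper. The two nontrivial steps, however, you explicitly leave open. The first is transferring regularity and non-discreteness to a valuation on $L\equiv K$. The second is producing a tame valuation with divisible value group of residue characteristic $p$.

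Your fallback for the first step is circular. Proposition~\ref{prop:tame} and the Remark after it only show that $\mathcal{C}_p^\mathrm{ring}$ is elementary. To conclude the same for $\mathcal{C}_{(0,p)}^\mathrm{ring}$ you would need $\mathcal{C}_p^\mathrm{ring}\cap\{K\mid \mathrm{char}(K)=0\}\subseteq\mathcal{C}_{(0,p)}^\mathrm{ring}$. That is, a field merely $\Lring$-equivalent to a good one must carry a henselian valuation with semiperfect $\Oo/(p)$ and regular non-discrete value group, which is exactly what was to be shown.

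The first missing idea is the paper's Claim~1: $K$ admits no unramified henselian valuation $\nu$ of characteristic $(0,p)$. Such a $\nu$ would be comparable with $v$; otherwise it would induce a valuation with non-divisible value group on the separably closed residue field of their finest common coarsening. Since $\nu$ has no proper coarsening of residue characteristic $p$, it would be a coarsening of $v$, and a proper one because $v$ is not discrete. By regularity of $vK$ its value group would then be divisible, contradicting that $\nu(p)$ is minimal positive. So your ``unramified case'' cannot occur at all. By the $\emptyset$-definability in Lemma~\ref{lem-mixed}(2), the same holds for $L$ and for ultrapowers of $K$. In particular the semiperfect witness $w$ on $L$ is automatically non-discrete.

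For regularity, first arrange (via Proposition~\ref{prop:o/p}) that $w$ has no proper coarsening of residue characteristic $p$. Then $\langle w(p)\rangle$ is the \emph{smallest} nontrivial convex subgroup of $wL$, not the largest. So regularity reduces to divisibility of $wL/\langle w(p)\rangle$. This is expressed by first-order sentences in the $\emptyset$-definable constant $w(p)$, so it can be checked in $M\cong K^\mathcal{U}\cong L^\mathcal{U}$. There, compare $w^\mathcal{U}$ and $v^\mathcal{U}$ via their finest common coarsening. Either $w^\mathcal{U}M$ is divisible, or $v^\mathcal{U}$ coarsens $w^\mathcal{U}$. In the latter case the coarsening of $w^\mathcal{U}$ by $\langle w^\mathcal{U}(p)\rangle$ has residue characteristic $0$. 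Hence it is a proper coarsening of $v^\mathcal{U}$ and has divisible value group by regularity of $v^\mathcal{U}M$.

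For generation, no ``general form'' of \cite[Main Theorem]{JK25} is needed. In the ultrapower $(L,w)$, write the coarsest coarsening $w^+$ of residue characteristic $p$ as $w^0$ composed with the rank-one valuation $\bar w^+$ on $Lw^0$. Here $\Oo_{w^0}=\Oo_w[1/p]$, which is your residue-characteristic-$0$ choice.
\begin{itemize}
\item $w^0$ is equicharacteristic-$0$ henselian, hence tame, and has divisible value group by regularity.
\item $w^+$ is not unramified by Claim~1 transferred to $L$. Hence, by $\aleph_1$-saturation, $\bar w^+$ is maximal with value group $\mathbb{R}$ \cite[Theorem 1.13]{AK16}.
\item Its residue field $\mathrm{Frac}((\Oo_w/(p))_\mathrm{red})$ is perfect, so $\bar w^+$ is tame with divisible value group \cite[Theorem 3.2]{Kuh16}.
\end{itemize}
Composing, $w^+$ is tame with divisible value group and residue characteristic $p$, and Proposition~\ref{prop:tame}(1) applies.
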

\begin{proof} Clearly, every field of characteristic $0$ which is perfectoid with respect to a valuation of residue characteristic $p$ is in $\mathcal{C}_{(0,p)}^\mathrm{ring}$.
We first show that the class $\mathcal{C}_{(0,p)}^\mathrm{ring}$ contains only 
fields $\Lring$-elemen\-tarily
equivalent to perfectoid fields of characteristic $(0,p)$.
Let $K \in \mathcal{C}_{(0,p)}^{\textnormal{ring}}$, and let $v$ be the 
henselian valuation on $K$ with $vK$ regular and non-discrete and $\Oo_v/(p)$
semiperfect. 

We first show the following:
\begin{claim}
    $K$ admits no unramified henselian valuation of characteristic $(0,p)$.
\end{claim}
\begin{claimproof}
Assume that $\nu$ was an unramified henselian valuation on $K$ of characteristic $(0,p)$. 
As in the proof of Proposition \ref{prop:o/p}, we argue that $\nu$ would then be a 
coarsening of $v$: If $\nu$ and $v$ were not comparable, their finest common coarsening $u$ would have separably closed residue field, but then $\nu$
would induce a valuation on the separably closed field $Ku$ with non-divisible value group.
As $\nu$ has no proper coarsenings of residue characteristic $p$, we obtain $\Oo_v \subseteq \Oo_\nu$.
Since $v$ itself is not discrete, $\nu$ must even be a proper coarsening of $v$. But, as $vK$
is regular, all proper coarsenings of $v$ have divisible value group, a contradiction.
\end{claimproof}

Let now $(L,w)=(K,v)^\mathcal{U}$ be a nonprincipal ultrapower, and let $w^+$
denote the coarsest coarsening of $w$ with $\mathrm{char}(Lw^+)=p$, and $w^0$ denote the
finest coarsening of $w$ (and hence of $w^+$) of residue characteristic $0$, i.e., 
$\Oo_{w^0} = \Oo_{w}[1/p] = \Oo_{w^+}[1/p]$. 
We now show the following:
\begin{claim} $(L,w^+)$ is tame with divisible value group.
\end{claim}
\begin{claimproof}
We prove first that both $w^0$ and the valuation
$\bar{w}^+$ induced by $w^+$ on $Lw^0$ are tame with divisible value group.
This is clear for $w^0$: since $w^0$ is a henselian valuation of equicharacteristic $0$, it is tame,
and as it is a proper coarsening of $w$, the regularity of $wL$ implies that
$w^0L$ is divisible.

We now argue that $\bar{w}^+$ is also tame with divisible value group.
By Claim 1 and Lemma \ref{lem-mixed}(2), $L$ admits no unramified henselian valuation of characteristic $(0,p)$.
In particular, $w^+$ is not unramified, and hence the rank-$1$ valuation $\bar{w}^+$ is 
maximal and has value group isomorphic to $\mathbb{R}$ (see 
\cite[Theorem 1.13]{AK16}, this is were
$\aleph_1$-saturation of $(L,w)$ is needed). As the residue field of $\bar{w}^+$ (which coincides
with the residue field of $w^+$) is $\mathrm{Frac}((\Oo_w/(p))_\mathrm{red})$, it is perfect.
By \cite[Theorem 3.2]{Kuh16}, we conclude that 
$\bar{w}^+$ is tame with divisible value group. 

As compositions of tame valuations with divisible value group are again tame with divisible value group (this is a well-known fact, see 
\cite[Lemma 2.9]{AJ24}
for an explicit proof that defectlessness composes well), we conclude that $(L,w^+)$ is tame with 
divisible value group.
\end{claimproof}

Now, by Proposition \ref{prop:tame},
$L$ (and hence also $K$) 
is $\Lring$-equivalent to a perfectoid field of residue characteristic $p$.

\smallskip

We show that the class 
$\mathcal{C}^\mathrm{ring}_{(0,p)}$ is $\Lring$-elementary by arguing that it is closed under ultraproducts and elementary equivalence (cf.~Fact \ref{fact}(1)). 
Another way one could argue is that $\mathcal{C}^\mathrm{ring}_{(0,p)}$ coincides
with the intersection of the $\Lring$-elementary class $\mathcal{C}_p^\mathrm{ring}$ defined above and the
$\Lring$-elementary class of fields of characteristic $0$, i.e., 
via Proposition \ref{prop:tame}. We give a self-contained proof without invoking
\cite[Main Theorem]{JK25}.

That $\mathcal{C}_{(0,p)}^{\textnormal{ring}}$ is closed under ultraproducts follows once
again from Fact \ref{fact}(2):
being a henselian valuation 
of mixed characteristic with $\Oo_v/(p)$ semiperfect and
$vK$ regular and non-discrete is $\Lval$-axiomatizable.

Finally, we verify that $\mathcal{C}_{(0,p)}^\mathrm{ring}$ is closed under $\Lring$-elementary equivalence. Take $K \in \mathcal{C}_{(0,p)}^\mathrm{ring}$, let $v$ be the valuation on $K$ witnessing that $K \in \mathcal{C}_{(0,p)}^\mathrm{ring}$, and let $L \equiv_{\Lring} K$.
Then, by Lemma \ref{lem-mixed}(1) and Proposition \ref{HensVal0pSemiperfElem}, $L$ admits a henselian valuation $w$ of mixed characteristic $(0,p)$ with $\Oo_w/(p)$ semiperfect.
As $K$ admits no unramified henselian valuation of characteristic $(0,p)$ 
by Claim 1, neither does $L$ by Lemma \ref{lem-mixed}(1).
In particular, $wL$ is non-discrete.
Without loss of generality, by Proposition \ref{prop:o/p}, $w$ admits no proper 
coarsenings of residue characteristic $p$. 

We claim that $wL$ is regular. The proof structure 
is similar to that of Proposition \ref{prop:o/p}.
Applying the Keisler-Shelah Theorem (\cite[Theorem 6.1.15]{changkeisler}) once again, let 
$M \cong K^\mathcal{U} \cong L^\mathcal{U}$
be isomorphic to nonprincipal ultrapowers of $K$ and $L$, and denote by $v^\mathcal{U}$ and
$w^\mathcal{U}$ the valuations on $M$ corresponding to the ultrapowers of $v$ and $w$.
We show that $w^\mathcal{U}M$ is regular.
If $M$ is algebraically closed, $w^\mathcal{U}$ is divisible and in particular regular.
Otherwise, $w^\mathcal{U}$ and $v^\mathcal{U}$ have a finest common coarsening $u$ 
\cite[Theorem 4.4.1]{EP05}. If $u$ 
is a proper coarsening of both, $Mu$ is algebraically closed. As $u$ is a proper coarsening of $v^\mathcal{U}$ which has regular value group, the value group $Mu$ is divisible. As $Mu$ is algebraically closed, the valuation induced by $w^\mathcal{U}$ on $Mu$ also has divisible value group. Thus, the value group of $w^\mathcal{U}$ is again divisible hence regular. 
Hence, we may assume that $w^\mathcal{U}$ and $v^\mathcal{U}$ are comparable. If $w^\mathcal{U}$ is a proper
coarsening of $v^\mathcal{U}$, its value group is once again divisible. Thus, we are left with
the case that $v^\mathcal{U}$ is a 
coarsening of $w^\mathcal{U}$. Let $\langle w^\mathcal{U}(p) \rangle$ be the convex subgroup of $w^\mathcal{U}M$ 
generated by $w^\mathcal{U}(p)$. Then, as the coarsening $\nu$ of $w^\mathcal{U}$ defined
by the composition 
$\nu: M^\times \overset{w^\mathcal{U}}{\to} w^\mathcal{U}M \to w^\mathcal{U}M/ \langle w^\mathcal{U}(p) \rangle$
has residue characteristic $0$, it is in particular a proper coarsening of $v^\mathcal{U}$
and hence has divisible value group. 
This is $\Lval$-axiomatizable (using the $\emptyset$-definable constant $w^\mathcal{U}(p)$): the quotient $w^\mathcal{U}M/ \langle w^\mathcal{U}(p) \rangle$ is $n$-divisible for $n > 0$ if and only if 
$$w^\mathcal{U}M \models \underbrace{ (\forall \gamma) (\exists \delta)(\exists\varepsilon)\, [\gamma = n\cdot \delta + \varepsilon\, \wedge\, -n\cdot w^\mathcal{U}(p) < \varepsilon < w^\mathcal{U}(p)}_{\varphi_n}].$$
Indeed, it is clear that if $w^\mathcal{U} \models \varphi_n$ for each $n \geq 2$, then
$w^\mathcal{U}M/ \langle w^\mathcal{U}(p) \rangle$ is $n$-divisible. Conversely, assume 
that $w^\mathcal{U}M/ \langle w^\mathcal{U}(p) \rangle$ is $n$-divisible, i.e.,
for every $\gamma$ we find $\delta$ with 
$\gamma - n\delta = \varepsilon \in \langle w^\mathcal{U}(p) \rangle$. We want to show that
we can choose $\varepsilon \in [-w^\mathcal{U}(p), w^\mathcal{U}(p)]$. By replacing $\gamma$
with $-\gamma$ if necessary, we may assume $\varepsilon \geq 0$.
Let $m \in \mathbb{N}$ be maximal such that $n\cdot (m+1) \cdot w^\mathcal{U}(p) > \varepsilon \geq 
n\cdot m \cdot w^\mathcal{U}(p)$. Then 
$$\varepsilon' = \varepsilon - \underbrace{n\cdot m \cdot w^\mathcal{U}(p)}_{\in \,n\cdot w^\mathcal{U}M}< n\cdot w^\mathcal{U}(p), $$
so $\varepsilon'$ witnesses that $\varphi_n$ holds.

Since the embedding $wM \leq w^\mathcal{U}M$ is an elementary embedding of ordered abelian groups, 
we conclude that $ wM/ \langle w(p) \rangle$ is also divisible.  As $w$ has no coarsenings of residue characteristic $p$, any proper convex subgroup of $wL$ contains $w(p)$. Thus,
we have shown that all proper quotients of $wL$ by convex subgroups are divisible, 
hence $wL$ is regular.
\end{proof}

\printbibliography

\end{document}